\numberwithin{equation}{section}
\newtheorem{theorem}{Theorem}[section]
\newtheorem{lemma}[theorem]{Lemma}
\newtheorem{proposition}[theorem]{Proposition}
\theoremstyle{definition}
\newtheorem{remark}[theorem]{Remark}
\newtoks \prt
\numberwithin{equation}{section}
\newtheorem{proclaim}{\the \prt}[section]
\def\bh{\begin{proclaim}}
\def\eh{\end{proclaim}}
\newtoks \defk
\theoremstyle{definition}
\numberwithin{equation}{section}
\newtheorem{definice}{ \the \defk}[section]
\def\bde{\begin{definice}}
\def\ede{\end{definice}}
\newtoks \pozn
\theoremstyle{remark}
\def\sup{\operatorname{sup}}
\def\min{\operatorname{min}}
\def\max{\operatorname{max}}
\def\q{\mathbf q}
\def\R{\mathbb R}
\def\S{\mathbb S}
\def\\tau {\mathcal \tau }
\def\u{\mathbf u}
\def\U{\mathbf U}
\def \and {\ \&\ }
\newcommand{\dx}{\,\mathrm{d}x}
\def\u{\mathbf{u}}
\def\ep{\varepsilon}
\def\n{\mathbf{n}}
\def\ve{\mathbf{v}}
\def\de{\partial}
\newcommand{\abs}[1]{\ensuremath{\left| #1 \right|}}
\renewcommand{\div}{{\rm div}}
\newcommand{\bFormula}[1]{
%\bigskip {\it Formule #1} \bigskip
\begin{equation} \label{#1}}
\newcommand{\eF}{\end{equation}}
\newcommand{\EE}{\mathcal{E}}
\def\eqn#1$$#2$${\begin{equation}\label#1#2\end{equation}}
\begin{document}

\title[Dimension Reduction for the Full System]{Dimension reduction for the full Navier-Stokes-Fourier system}

\author{Jan B\v rezina}
\address{Tokyo Institute of Technology,\br 2-12-1 Ookayama, Megoru-ku,\br Tokyo, 152-8550\br Japan}
\email{brezina@math.titech.ac.jp}
\author{Ond\v rej Kreml}
\address{Institute of Mathematics of the Czech Academy of Sciences,\br \v Zitn\' a 25,\br 115 67 Praha 1,\br Czech Republic}
\email{kreml@math.cas.cz}
\author{V\'aclav M\'acha}\address{Industry-University Research Center,\br Yonsei University,\br 50 Yonsei-ro Seodaemun-gu,\br Seoul, 03722,\br Republic of Korea}
\email{macha@math.cas.cz}
\thanks{O.K. acknowledges the support of the GA\v CR (Czech Science Foundation) project GA13-00522S in the general framework of RVO: 67985840. The research of V.M. has been supported by the grant NRF-20151009350.}
\keywords{Navier-Stokes-Fourier system, dimension reduction, relative entropy}
\subjclass{35Q35, 76N15}
\date{}

\begin{abstract}
It is well known that the full Navier-Stokes-Fourier system does not possess a strong solution in three dimensions which causes problems in applications. However, when modeling the flow of a fluid in a thin long pipe, the influence of the cross section can be  neglected and the  flow is basically one-dimensional. This allows us to deal with strong solutions which are more  convenient for numerical computations. The goal of this paper is to provide a rigorous justification of this approach. Namely, we prove that any suitable weak solution to the three-dimensional NSF system tends to a strong solution to the one-dimensional system as the thickness of the pipe tends to zero.
\end{abstract}

\maketitle

\section{Introduction}
As introduced in \cite[Chapter 1]{Fei2}, governing equations for a flow of a general compressible viscous heat conducting fluid in a domain of $\mathbb R^3$ read as

\begin{eqnarray}
\label{continuityeq-main} \partial_t \rho + \div_x (\rho \u) &=& 0,\\
\label{momentumeq-main} \partial_t (\rho \u) + \div_x (\rho \u \otimes \u) + \nabla_x p (\rho, \theta) -\div_x \mathbb{S}(\theta, \nabla_x \u) &=& 0,\\
\label{entropyeq-main} \partial_t(\rho s(\rho, \theta)) + \div_x (\rho s(\rho, \theta)\u) + \div_x \left(\frac{\q(\theta, \nabla_x \theta)}{\theta}\right) &=& \sigma,
\end{eqnarray}
where $(\rho, \u, \theta)$ stand for the unknown fluid mass density, the  velocity field and the temperature respectively, $p$ is the pressure, $s$ is the entropy, $\q$ is the heat flux, $\sigma$ is the entropy production rate and $\mathbb S$ represents the stress tensor.

We consider a family of shrinking domains $\Omega_\ep$ of the form 

\begin{equation*} \Omega_\ep = Q_\varepsilon\times (0,1),  \ \ \ Q_\varepsilon = \varepsilon Q, 
\end{equation*}
where $Q$ is an open rectangular domain in $ \mathbb R^2$ and $\ep >0$. Under suitable conditions on the initial data it is natural to expect that weak solutions $(\rho_\ep, \u_\ep, \theta_\ep)$ of \eqref{continuityeq-main}--\eqref{entropyeq-main}  on $\Omega_\ep$ tend, as $\ep \rightarrow 0$, to a 
 classical solution $(\tilde{\rho},\tilde{u},\tilde{\theta})$ of the one-dimensional system

 \begin{align}
\label{continuityeq-1D}
\partial_t \tilde{\rho} + \partial_y (\tilde{\rho} \tilde{u}) &= 0,\\
\label{momentumeq-1D} 
 \partial_t (\tilde{\rho} \tilde{u}) + \partial_y (\tilde{\rho} \tilde{u}^2) + \partial_y p(\tilde{\rho}, \tilde{\theta})-\de_y[ \tilde{S}(\tilde{\theta}, \de_y \tilde{u})] &= 0,\\ %\ \ \ \nu_i = \frac43 \mu_i + \eta_i, \ i=0,1,
\label{entropyeq-1D}
\partial_t(\tilde{\rho} s(\tilde{\rho}, \tilde{\theta})) + \de_y (\tilde{\rho} s(\tilde{\rho}, \tilde{\theta})\tilde{u}) + \de_y \left(\frac{q(\tilde{\theta}, \de_y \tilde{\theta})}{\tilde{\theta}}\right)\nonumber &=\\ \frac1{\tilde{\theta}}\bigg(\tilde{S}(\tilde{\theta},\de_y \tilde{u})&\de_y \tilde{u} - \frac{q(\tilde{\theta}, \de_y \tilde{\theta}) \de_y \tilde{\theta}}{\tilde{\theta}}\bigg),
\end{align}
where %$\nu_0$ and $\nu_1$ stand for the viscosity coefficients  which we specify later
$\tilde{S}(\tilde{\theta},\de_y \tilde{u})$ is naturally related to the three-dimensional stress tensor $\mathbb{S}$ and similarly $q$ to the heat flux vector $\q$, see \eqref{eq:1Dstress}--\eqref{eq:1Dheat}. Hereinafter we use the notation $x = (x_h,y)\in \mathbb R^3,\ x_h=(x_1,x_2)\in \mathbb R^2, \ y=x_3\in \mathbb R$ and denote the derivative in $x_3$  by $\partial_y$. In this paper we give a rigorous justification of the convergence $(\rho_\ep, \u_\ep, \theta_\ep)\rightarrow (\tilde{\rho}, (0,0,\tilde{u}), \tilde{\theta})$.

As far as we know, the limit passage for  heat conductive fluids has not yet been rigorously investigated and there is only a handful of results on related problems. Since incompressibility in one dimension does not allow any movement, such limit makes a little sense for 1D incompressible flows. However, dimension reduction to 2D-planar flows was examined in \cite{IfRaSe}, \cite{RaSe}, \cite{RaSe2}, \cite{RaSe3} -- see also references given therein.
 The case of a compressible barotropic fluid was studied by Vod\'ak \cite{Vod} and later by Bella et. al. \cite{BeFeNo}.

The paper is organized as follows. In Section 2 we introduce detailed description of our problem. In Section 3 we present the concepts of a weak and a classical solution for 3D and 1D system, respectively, and discuss their existence. The main result is stated in Section 4. Section 5 contains preliminary calculations which are later used in Sections 6 and 7 in order to establish the proof of the main theorem from Section 4.

\section{Setting of the problem}

\subsection{Structural hypothesis for the 3D problem}

For given $\varepsilon >0$ the system \eqref{continuityeq-main}--\eqref{entropyeq-main} on $\Omega_\ep$ is complemented by the initial conditions

$$
\rho_\ep(0,\cdot) = \rho_{0,\ep},\ \rho_\ep\u_\ep(0,\cdot)=(\rho\u)_{0,\ep}\ \mbox{and} \  \theta_\ep(0,\cdot) = \theta_{0,\ep},
$$
such that the integral averages  over $Q_\ep$ converge weakly (with respect to $y$) in $L^1(0,1)$
to some limit as $\ep \to 0$. For the precise conditions see the statement of Theorem \ref{mainthm}.

We suppose that the viscous stress tensor $\mathbb S$ is a linear function of the velocity gradient and therefore described by the Newton's law

\eqn{const-stress} $$\mathbb{S}(\theta, \nabla_x \u) = \mu(\theta) \left(\nabla_x \u+ \nabla_x^T  \u - \frac23 \div_x \u \mathbb{I}\right) + \eta (\theta) \div_x \u \mathbb{I},$$
with the shear viscosity coefficient $\mu(\theta) > 0$ and the bulk viscosity coefficient $\eta (\theta) \geq 0$ satisfying %for simplicity of our presentation relations

%\todo{how about some $0$ coefficients?}

\eqn{stress-cond} $$\mu(\theta) = \mu_0 + \mu_1 \theta, \ \ \ \eta(\theta) = \eta_0 + \eta_1 \theta, \ \ \ \mu_0,\mu_1 > 0,\quad \eta_0,\eta_1 \geq 0.$$ 
The heat flux $\q$ satisfies the Fourier's law
\eqn{const-heatflux} $$\q(\theta, \nabla_x \theta ) = -\kappa(\theta) \nabla_x \theta, $$
where we %again for simplicity of our presentation
assume the following form of $\kappa(\theta)$

%\todo{how about diminishing some $\kappa_i$ or viscosity in the limit?}

\eqn{heatflux-cond} $$\kappa(\theta) = \kappa_0  + \kappa_2 \theta^2 +  \kappa_3  \theta^3, \ \  \kappa_i > 0,\ i = 0,2, 3.$$

The system of equations \eqref{continuityeq-main}--\eqref{entropyeq-main} with the constitutive relations \eqref{const-stress} and  \eqref{const-heatflux}
 is called the Navier-Stokes-Fourier system.

Equations \eqref{continuityeq-main}--\eqref{entropyeq-main} are supplemented with the
conservative boundary condition 

\begin{equation} \label{eq:BCq} \q\cdot \n|_{\partial \Omega_\varepsilon} = 0,\end{equation}
and the complete slip boundary conditions

%\todo{to use the new Poincare-Korn we need extra Dirichlet on top and bottom}

\begin{equation}\label{eq:BC}
\u\cdot \n|_{\partial \Omega_\varepsilon} = 0, \ \ \ [\mathbb{S}(\nabla_x \u)\cdot \n]\times \n|_{\partial \Omega_\varepsilon} = 0,\end{equation}
where the symbol $\n$ denotes the outer normal vector. It is worth pointing out that the complete slip boundary conditions are suitable for a dimension-reduction as the no-slip boundary conditions yield only a trivial solution in the asymptotic limit $\ep \to 0$.

The concept of a weak solution to the Navier-Stokes-Fourier system  based on the Second law of thermodynamics was introduced in \cite{Ducomet}. The weak solutions satisfy the field equations \eqref{continuityeq-main}--\eqref{entropyeq-main} in the sense of distributions where the entropy production rate $\sigma$ is a non-negative measure,

\eqn{entropyproduction} $$\sigma \geq \frac1{\theta}\left(\mathbb{S} (\theta, \nabla_x \u): \nabla_x \u - \frac{\q(\theta, \nabla_x \theta) \nabla_x \theta}{\theta}\right).$$
In order to compensate for the lack of information resulting from the inequality sign in \eqref{entropyproduction} the system is supplemented with the total energy balance,

 $$\frac{\partial}{\partial t} \int_{\Omega_\varepsilon} \left(\frac12 \rho |\u|^2 + \rho e (\rho, \theta)\right)dx = 0,$$
where $e = e (\rho, \theta)$ is the (specific) internal energy. Under these circumstances it can be shown (see  \cite[Chapter 3]{Fei2}) that any sufficiently smooth weak solution of \eqref{continuityeq-main}--\eqref{entropyeq-main} satisfies the standard relation

$$\sigma = \frac1{\theta}\left(\mathbb{S} (\theta, \nabla_x \u): \nabla_x \u - \frac{\q(\theta, \nabla_x \theta) \nabla_x \theta}{\theta}\right).$$

The proof of our main theorem is based on the method of the relative entropy (see \cite{Carrillo}, \cite{Dafermos}, \cite{Saint-Raymond}), represented  by the quantity

\begin{equation} \label{entfunc} \EE(\rho, \theta|r, \Theta)  = H^\Theta(\rho, \theta) - \partial_\rho H^\Theta(r, \Theta)(\rho - r) - H^\Theta(r, \Theta),
\end{equation}
where $H^\Theta(\rho, \theta)$ is the  thermodynamic potential called the ballistic free energy

\begin{equation*}H^\Theta(\rho, \theta) = \rho e (\rho, \theta) - \Theta \rho s(\rho, \theta),\end{equation*}
introduced by Gibbs  and discussed more recently by Ericksen \cite{Ericksen}. 

We assume that the thermodynamic functions $p$, $e$ and $s$ are interrelated through the Gibbs' equation 

\eqn{gibbs} $$ \theta D s (\rho, \theta) = De(\rho, \theta) + p(\rho,\theta) D\left(\frac1\rho\right).$$

The subsequent analysis leans essentially on the thermodynamic stability of the fluid  expressed through

\eqn{thermostabhypo} $$ \frac{\partial p (\rho, \theta)}{\partial \rho } > 0, \ \ \ \frac{\partial e (\rho, \theta)}{\partial \theta } > 0 \mbox{ for all } \rho, \theta >0.$$

%Our method is based on a kind of {\it total dissipation balance} stated in terms of the quantity
%
%$$  I = \int_{\Omega_\ep} \left(\frac12 \rho |\u - \U|^2 + \EE(\rho, \theta| r, \Theta)\right)dx,$$
%where $\{\rho, \theta, \u\}$ is a weak solution to the Navier-Stokes-Fourier system and  $\{ \U, r, \Theta\}$ is any triple of functions with an appropriate regularity and boundary conditions on $\U$. 

Motivated by the existence theory developed in \cite[Chapter 3]{Fei2} we assume that the pressure $p = p (\rho, \theta)$ can be written in the form 

\eqn{const-pressure} $$p(\rho, \theta) =  \theta^\frac52  P\left(\frac{ \rho}{\theta^\frac32}\right) +  \frac{a}3 \theta^4, \ \ \ a >0,$$
where

\begin{equation} P \in C^1[0,\infty),\  P(0) = 0,\  P'(Z) > 0 \mbox{ for all } Z \geq 0. \end{equation}

In agreement with Gibbs' relation \eqref{gibbs}, the specific internal energy can be taken as

\begin{equation} e(\rho, \theta) =  \frac32  \frac{\theta^\frac52}{\rho}  P\left(\frac{ \rho}{\theta^\frac32}\right) +  a \frac{\theta^4}{\rho}.\end{equation}

Furthermore, by virtue of the second inequality in the thermodynamic stability hypotheses \eqref{thermostabhypo}, we have 

\eqn{P-cond-2} $$0 < \frac{\frac53 P(Z) - ZP'( Z)}{Z} < c, \ \ \ \mbox{ for all } Z >0. $$ 
In particular, \eqref{P-cond-2} implies that the function $ Z\mapsto \frac{P(Z)}{Z^\frac53}$ is  decreasing and we suppose that

\begin{equation}  \lim_{Z \to \infty} \frac{P(Z)}{Z^\frac53} = P_\infty > 0.\end{equation}

Finally, the formula for the (specific) entropy reads from the Gibbs' equation \eqref{gibbs} as

\begin{equation} s(\rho,\theta) = S\left(\frac{\rho}{\theta^\frac32}\right) + \frac{4a}3 \frac{\theta^3}\rho,\end{equation}
where, in accordance with the Third law of thermodynamics,

 \eqn{S-cond} $$ 
S'(Z) = -\frac32 \frac{\frac53  P( Z) - ZP'( Z)}{Z^2} < 0.$$

\subsection{Structural hypothesis for the 1D problem}

Since we are interested in smooth solutions of the 1D equations, we complement the system of equations \eqref{continuityeq-1D}-\eqref{entropyeq-1D} with the initial conditions

\begin{equation}\label{1DIC}
\tilde{\rho}(0,\cdot) = \tilde{\rho}_0, \ \tilde{u}(0,\cdot) = \tilde{u}_0, \ \tilde{\theta}(0,\cdot) = \tilde{\theta}_0,
\end{equation}
with $\tilde{\rho}_0 \geq c > 0$,  $\tilde{\theta}_0 \geq c > 0$ and $\tilde{u}_0$ being smooth functions.

The form of the stress tensor $\mathbb{S}$ and the heat flux $\q$ naturally yields their one-dimensional counterparts, namely we expect that in the limit we recover
\begin{equation}\label{eq:1Dstress}
\tilde{S}(\tilde{\theta},\de_y \tilde{u}) = (\nu_0 + \nu_1\tilde{\theta})\de_y \tilde{u},
\end{equation}
with the viscosity coefficients
\begin{equation}\label{eq:1Dvisc}
\nu_i = \frac 43 \mu_i + \eta_i, \qquad i = 0,1,
\end{equation}
and
\begin{equation}\label{eq:1Dheat}
q(\tilde{\theta},\de_y \tilde{\theta}) = -\kappa(\tilde{\theta})\de_y\tilde{\theta}.
\end{equation}

\section{Concepts of solutions} 
\subsection{Weak solutions to the $3$D-system} \label{weaksol-section}

Let $T >0$ and $\Omega \subset \R^3$ be  a bounded Lipschitz domain. We say that a triple $(\rho,  \u,\theta )$ is a weak solution to the Navier-Stokes-Fourier system \eqref{continuityeq-main}--\eqref{entropyeq-main} with \eqref{const-stress}--\eqref{entropyproduction} in $(0,T)\times \Omega$ emanating from the initial data

$$ \rho(0,\cdot) = \rho_0, \ \ \ \rho \u(0,\cdot) = (\rho\u)_0,\ \ \
 \rho s(\rho, \theta) (0,\cdot) = \rho_0 s(\rho_0, \theta_0), \ \rho_0 \geq 0, \ \theta_0 > 0, $$
if: 
\begin{itemize}
\item the density and the absolute temperature satisfy $\rho(t,x) \geq 0$, $ \theta(t,x) > 0$ for almost all $(t,x) \in (0,T )\times \Omega$, $\rho \in C_{weak}([0,T ];L^\frac53(\Omega))$, $\rho \u \in C_{weak}([0,T ]; L^\frac54 (\Omega;\R^3))$, $\theta \in L^\infty(0,T ; L^4(\Omega))\cap L^2(0,T ;W^{1,2}(\Omega))$ and $\u \in L^2(0,T ;W^{1,2} (\Omega;\R^3))$, $\u \cdot \n |_{\de \Omega} =0$;

\item  equation \eqref{continuityeq-main} is replaced by a family of integral identities

$$\int_{\Omega} \rho(\tau , \cdot)\varphi(\tau , \cdot)dx - \int_{\Omega} \rho_0 \varphi(0,\cdot)dx = \int_0^\tau  \int_{\Omega} (\rho \de_t \varphi + \rho \u \cdot \nabla_x \varphi )dxdt,$$
for any $\varphi \in C^1([0,T]\times \overline {\Omega})$ and any $\tau  \in [0,T ]$ ;

\item the momentum equation \eqref{momentumeq-main} is satisfied in the sense of distributions, specifically,

$$ \int_{\Omega} \rho \u (\tau , \cdot) \varphi(\tau , \cdot)dx - \int_{\Omega} (\rho\u)_0  \varphi(0, \cdot)dx $$

$$=\int_0^\tau  \int_{\Omega} (\rho \u  \de_t \varphi + \rho \u \otimes \u :\nabla_x \varphi + p(\rho, \theta) \div_x \varphi - \S (\theta, \nabla_x \u):\nabla_x \varphi)dxdt,$$
for any $\varphi \in C^1([0,T]\times \overline {\Omega};\R^3)$, $\varphi\cdot \n|_{\partial \Omega} = 0$ and any $\tau  \in [0,T ]$;

\item the entropy balance \eqref{entropyeq-main}, \eqref{entropyproduction} is replaced by a family of integral inequalities

 $$ \int_{\Omega} \rho_0 s(\rho_0, \theta_0) \varphi(0, \cdot) dx - \int_{\Omega} \rho s(\rho, \theta) ( \tau , \cdot) \varphi(\tau , \cdot) dx $$

$$+ \int_0^\tau  \int_{\Omega} \frac1\theta\left(\S(\theta, \nabla_x \u): \nabla_x \u - \frac{ \q (\theta, \nabla_x \theta) \cdot \nabla_x \theta}\theta \right)\varphi dx dt$$

$$\leq -\int_0^\tau  \int_{\Omega} \left(\rho s(\rho, \theta)\de_t \varphi + \rho s(\rho, \theta) \u \cdot \nabla_x \varphi + \frac{ \q (\theta, \nabla_x \theta) \cdot \nabla_x \varphi}\theta \right) dx dt,$$
for any  $\varphi \in C^1([0,T]\times \overline {\Omega})$, $\varphi \geq 0$ and almost all $\tau  \in [0,T ]$;

\item the total energy is conserved

$$ \int_{\Omega} \left(\frac12 \rho |\u|^2 + \rho e (\rho, \theta)\right)(\tau , \cdot) dx = \int_{\Omega} \left(\frac{1}{2\rho_0} |(\rho\u)_0|^2 + \rho_0 e (\rho_0, \theta_0)\right)dx ,$$
for almost all $\tau  \in [0,T ]$.

\end{itemize}  

The existence of global-in-time weak solutions to the three dimensional Navier-Stokes-Fourier system was established in \cite[Theorem 3.1]{Fei2}. It reads as follows.

\begin{theorem}\label{t:weaksolex}
Let $\Omega \subset \mathbb{R}^3$ be a bounded domain of class $C^{2,\nu}$, $\nu \in (0,1)$. Assume that $\rho_0 \in L^{5/3}(\Omega)$, $\int_\Omega \rho_0 = M_0 > 0$; $(\rho\u)_0 \in L^1(\Omega)$, $(\rho\u)_0 = 0$ almost everywhere on the set $\{x \in \Omega, \rho_0(x) = 0\}$; $\theta_0 > 0$ a.e. in $\Omega$ is such that $\rho_0 s(\rho_0,\theta_0) \in L^1(\Omega)$ and the initial energy of the system satisfies
$$\int_{\Omega} \left(\frac{1}{2\rho_0} |(\rho\u)_0|^2 + \rho_0 e (\rho_0, \theta_0)\right)dx < \infty.$$
Assume that the structural hypotheses \eqref{const-stress}--\eqref{heatflux-cond}, \eqref{entropyproduction}, \eqref{gibbs}--\eqref{S-cond} hold. Then for any $T > 0$ the Navier-Stokes-Fourier system admits a weak solution $(\rho,\u,\theta)$ on $(0,T)\times\Omega$ in the sense specified above.
\end{theorem}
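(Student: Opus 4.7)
The plan is to construct the weak solution as the limit of a three-level approximation scheme, following closely the strategy developed in \cite[Chapter 3]{Fei2}. On the innermost level one fixes small parameters $\varepsilon,\delta>0$, replaces $p(\rho,\theta)$ by an augmented pressure $p_\delta(\rho,\theta)=p(\rho,\theta)+\delta(\rho^\Gamma+\rho^2)$ for some sufficiently large $\Gamma$, adds an artificial viscosity $\varepsilon\Delta_x\rho$ (together with the appropriate compensating terms in the momentum and internal energy balance) to the continuity equation, and replaces the entropy inequality by a regularized internal energy equation, so that the approximate problem becomes parabolic and dissipates a well-defined energy. The momentum equation is then solved by a Faedo--Galerkin scheme on a finite-dimensional subspace of $W^{1,2}(\Omega;\mathbb{R}^3)$ compatible with the slip condition $\u\cdot\n|_{\partial\Omega}=0$, the regularized continuity and internal energy equations are handled by standard parabolic theory, and a Schauder fixed-point argument closes the construction for every triple $(n,\varepsilon,\delta)$.

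The second step is to derive a priori bounds that survive the three successive limits $n\to\infty$, $\varepsilon\to 0$, $\delta\to 0$. Total energy conservation yields uniform control of $\tfrac12\rho|\u|^2+\rho e(\rho,\theta)$ in $L^\infty(0,T;L^1(\Omega))$; combined with the structural hypotheses \eqref{const-pressure} and $\lim_{Z\to\infty}P(Z)/Z^{5/3}=P_\infty>0$ this extracts $\rho\in L^\infty(0,T;L^{5/3}(\Omega))$, $\theta\in L^\infty(0,T;L^4(\Omega))$ and $\sqrt{\rho}\,\u\in L^\infty(0,T;L^2(\Omega;\mathbb{R}^3))$. Testing the entropy inequality against $\varphi\equiv 1$ gives control of the entropy production, which together with \eqref{stress-cond} and \eqref{heatflux-cond} delivers $\u\in L^2(0,T;W^{1,2}(\Omega;\mathbb{R}^3))$ and $\log\theta,\theta^{3/2}\in L^2(0,T;W^{1,2}(\Omega))$, whence the regularity of $\theta$ in the statement and the almost-everywhere positivity $\theta>0$ (via integrability of $\log\theta$). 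The bound on the $\delta$-pressure provides the extra integrability needed to pass to the limit in the approximate momentum equation.

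The decisive and most delicate step is the limit passage, where strong convergence of the nonlinearities $p(\rho,\theta)$, $\rho\u\otimes\u$, $\rho s(\rho,\theta)\u$ and $\q(\theta,\nabla_x\theta)$ has to be recovered from merely weak bounds. Temperature compactness follows from Aubin--Lions applied to the renormalized internal energy balance, giving $\theta_n\to\theta$ almost everywhere along a subsequence. Density compactness is the genuine obstacle: it is obtained via the effective viscous flux identity of Lions and Feireisl, proved with a commutator argument based on the operator $\nabla_x\Delta_x^{-1}$, which shows that in the weak limit
\[
\overline{p(\rho,\theta)\,\rho}-\overline{\bigl(\tfrac43\mu(\theta)+\eta(\theta)\bigr)\div_x\u\,\rho}=\overline{p(\rho,\theta)}\,\rho-\overline{\bigl(\tfrac43\mu(\theta)+\eta(\theta)\bigr)\div_x\u}\,\rho.
\]
Combined with the renormalized continuity equation of DiPerna--Lions and the strict monotonicity of $Z\mapsto p(Z,\theta)$ in $Z$ guaranteed by \eqref{thermostabhypo}, this forces the pressure defect measure to vanish and yields $\rho_n\to\rho$ almost everywhere. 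Passing to the limit in every integral identity and inequality defining a weak solution as in Section \ref{weaksol-section} then completes the argument; the boundedness and $C^{2,\nu}$-regularity of $\Omega$ are used both in the regularity theory for the approximate elliptic/parabolic subproblems and in the density convergence step.
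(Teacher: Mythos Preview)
The paper does not prove this theorem at all: it is stated as a citation of \cite[Theorem~3.1]{Fei2}, introduced by the sentence ``The existence of global-in-time weak solutions to the three dimensional Navier-Stokes-Fourier system was established in \cite[Theorem~3.1]{Fei2}. It reads as follows.'' There is therefore no ``paper's own proof'' to compare your attempt against.

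That said, your sketch is a faithful outline of the strategy in \cite[Chapter~3]{Fei2}: the three-level approximation (Galerkin, artificial viscosity $\varepsilon\Delta_x\rho$, artificial pressure $\delta\rho^\Gamma$), the uniform energy and entropy bounds, and the limit passage via the effective viscous flux identity combined with the renormalized continuity equation to recover strong convergence of the density. As a high-level summary it is accurate, though of course each of the three limit passages in \cite{Fei2} occupies dozens of pages and involves many technical points (oscillation defect measures, careful treatment of the radiative terms in the entropy, Div--Curl lemma arguments) that a sketch of this length necessarily elides. If your intent was simply to indicate that the result is known and how it is proved, a direct citation in the style of the paper would be the appropriate choice.
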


Moreover, the authors also proved that every weak solution satisfies the total dissipation balance \cite[Equation (2.51)]{Fei2}, i.e.,

\begin{multline}\label{totaldiss}
\int_\Omega\left(\frac 12 \rho |\u|^2 + H^{\overline\theta}(\rho, \theta)\right)(\tau) dx + \overline\theta \sigma\left[[0,\tau]\times \overline \Omega\right] \\
=  \int_\Omega \left(\frac 12 \rho_0|\u_0|^2 + H^{\overline\theta}(\rho_0,\theta_0)\right)dx,
\end{multline}
for almost all $\tau \in [0,T]$ and for every positive constant $\overline\theta$.

\begin{remark}\label{LipBoundary}
It may seem that  Theorem \ref{t:weaksolex} is not suitable for our problem since the domain under consideration is of the form $(a,b)\times(c,d)\times(0,1)$ and hence not of class $C^{2,\nu}$. We overcome this issue by the following consideration. The smoothness of the domain is used in the proof of the existence of weak solutions  to ensure the smoothness of  Galerkin approximations. In our case we may use the special structure of the spatial domain together with the boundary conditions to extend any solution from $\Omega$ appropriately (as an even or an odd function) to create a solution with periodic boundary conditions on a larger box where no restrictions on the smoothness of the boundary are necessary.
\end{remark}

\subsection{Classical solutions to the $1$D-system} \label{classicsol-section}

As $\ep \to 0$ we observe that $\Omega_\ep \to (0,1)$. Moreover, we expect the solutions $(\rho_\ep, \u_\ep,  \theta_\ep)$ of  \eqref{continuityeq-main}--\eqref{entropyeq-main}  on $\Omega_\ep$ to converge to a classical solution $( \tilde{\rho}, \tilde{u}, \tilde{\theta})$ of \eqref{continuityeq-1D}--\eqref{entropyeq-1D}. The boundary conditions \eqref{eq:BCq} and \eqref{eq:BC} naturally lead to the no-slip boundary conditions for the velocity and the heat flux, i.e., 
\begin{equation}\label{bc1D}
\tilde{u}(\cdot, 0) = \tilde{u}(\cdot, 1) = 0, \ \ \  q(\tilde{\theta}, \de_y\tilde{\theta})(\cdot, 0) = q(\tilde{\theta}, \de_y\tilde{\theta})(\cdot, 1) = 0.
\end{equation}

There  has been published a lot of papers about the one-dimensional Navier-Stokes-Fourier system \eqref{continuityeq-1D}--\eqref{entropyeq-1D} with the no-slip  boundary conditions. We refer the reader to \cite {AnKaMo}, \cite{Kawo} and \cite{Valli} where the existence of solutions was provided under more restrictive assumptions on the pressure,  viscosity,  energy, etc. However, as far as we know, the global in time existence result for the system in its full generality has not been proven yet. Nevertheless, the local in time existence for any smooth initial data or global in time existence for small data can be expected to hold from the classical results on the topic. Since the existence of a classical one-dimensional solution is not a goal of this paper, we simply assume its existence without any proof. Let $T>0$. We assume that there exists a trio 

$$( \tilde \rho, \tilde u, \tilde \theta):[0,T]\times [0,1] \mapsto (0,\infty) \times \mathbb{R} \times (0,\infty),$$ of smooth functions that is the solution to \eqref{continuityeq-1D}-\eqref{entropyeq-1D}  and  \eqref{bc1D} on $[0,T]\times (0,1)$ satisfying 
$$\tilde \rho \geq c>0, \ \ \ \tilde \theta \geq c>0,$$
 with the initial conditions \eqref{1DIC}. 

%\todo{the limit passage asks for some minimal regularity on the solution, does it ask something more on initial data then what comes from weak solution existence theory? should we include the minimal regularity function spaces? the initial data should be specified at least such so the weak solution exists}

\section{Main Result}

The main result of this paper reads as follows.

\begin{theorem} \label{mainthm}
Let $Q \subset \R^2$ be an open rectangular domain and let $\Omega_\ep = \ep Q \times (0,1)$ for $\varepsilon >0$. Suppose that the structural hypotheses  \eqref{const-stress}--\eqref{heatflux-cond} and \eqref{entropyproduction} for the viscous stress tensor $\S$ and the heat flux $\q$ are satisfied together with \eqref{gibbs}--\eqref{S-cond} for the thermodynamic functions $p$, $e$ and $s$.

Assume that the initial data $(\rho_{0,\ep}, (\rho\u)_{0,\ep}, \theta_{0,\ep})$  satisfy all the assumptions of Theorem \ref{t:weaksolex} on domains $\Omega_\ep$ and denote $(\rho_\ep,\u_\ep,\theta_\ep)$ the corresponding sequence of weak solutions to the 3D Navier-Stokes-Fourier system on $(0,T)\times\Omega_\ep$ emanating from the initial data $(\rho_{0,\ep}, (\rho\u)_{0,\ep}, \theta_{0,\ep})$.

Let $(\tilde{\rho}_0, \tilde{u}_0, \tilde{\theta}_0)$ be smooth  functions such that there exists the classical solution $(\tilde{\rho}, \tilde{u}, \tilde{\theta})$ to the 1D Navier-Stokes-Fourier system on $(0,T)\times(0,1)$ emanating from $(\tilde{\rho}_0, \tilde{u}_0, \tilde{\theta}_0)$. Define $\tilde{\u}_0 = [0,0,\tilde{u}_0]$ and $\tilde{\u} = [0,0,\tilde{u}]$.

Let moreover
\begin{equation}\label{ititiallimit}
\begin{split}
\frac1{|Q_\ep|} \int_{Q_\ep} \rho_{0,\ep} (x_h, \cdot )dx_h \to \tilde{\rho}_0,\ \ \ \frac1{|Q_\ep|} \int_{Q_\ep} (\rho\u)_{0,\ep} (x_h, \cdot )dx_h \to \tilde{\rho}_0\tilde{\u}_0,\\
\frac1{|Q_\ep|} \int_{Q_\ep} \rho_{0,\ep} s(\rho_{\ep,0},\theta_{\ep,0}) dx_h \to \tilde \rho_0s(\tilde\rho_0,\tilde \theta_0),
\end{split}
\end{equation}
weakly in $L^1(0,1)$ and let 
 
\begin{multline}
\label{initlim2}\frac1{|Q_\ep|} \int_{\Omega_\ep} \left[\frac1{2\rho_{0,\ep}} |(\rho\u)_{0,\ep}|^2  + \rho_{\ep,0}e(\rho_{\ep,0},\theta_{\ep,0})\right] dx \\ \to  \int_0^1\left[ \frac12\tilde{\rho}_0 |\tilde{u}_0|^2 +  \tilde \rho_0e(\tilde \rho_0,\tilde\theta_0)\right] dy.\end{multline}

Then 
$$
\mbox{esssup}_{t\in (0,T)} \frac{1}{|Q_\varepsilon|}\|\rho_\varepsilon - \tilde \rho\|^{\frac 53}_{L^{\frac 53}(\Omega_\varepsilon)}\to 0,
$$
$$
\mbox{esssup}_{t\in (0,T)} \frac{1}{|Q_\varepsilon|}\|\theta_\varepsilon - \tilde \theta \|^2_{L^2(\Omega_\varepsilon)} \to 0,
$$
and
$$
\frac{1}{|Q_\varepsilon|}\|{\bf u}_\varepsilon - \tilde{\bf u}\|^r_{L^r((0,T) \times\Omega_\varepsilon)}\to 0,
$$
for every $r\in [1,2)$ as $\ep \to 0$. 

%$${\rm ess} \sup_{\tau\in (0,T)} \frac1{|Q_\ep|}\int_0^1\int_{Q_\ep} \left(\frac 12 \rho_\ep|\u_\ep - \tilde \u|^2 + \EE(\rho_\ep,\theta_\ep|\tilde \rho,\tilde \theta)\right)dx_hdy\to 0,
%$$
%\todo{shouldnt we write it more humanly? without $\EE$?}

\end{theorem}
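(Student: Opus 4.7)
The plan is to implement the relative entropy method of Feireisl and Novotn\'y, comparing $(\rho_\ep,\u_\ep,\theta_\ep)$ against a three-dimensional lift of the one-dimensional classical solution. Concretely, set $\tilde{\u}(t,x)=(0,0,\tilde u(t,y))$ on $\Omega_\ep$ and view $\tilde\rho,\tilde\theta$ as functions independent of $x_h$. Because $\tilde u(t,0)=\tilde u(t,1)=0$ and $\tilde{\u}$ is independent of $x_h$, one checks by direct computation that $\tilde{\u}\cdot\n=0$ and $[\mathbb{S}(\tilde\theta,\nabla_x\tilde{\u})\n]\times\n=0$ on $\partial\Omega_\ep$, so $(\tilde\rho,\tilde{\u},\tilde\theta)$ is an admissible test triple. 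The same structure shows that $(\tilde\rho,\tilde{\u},\tilde\theta)$ is in fact an \emph{exact} strong solution of the 3D system \eqref{continuityeq-main}--\eqref{entropyeq-main}: the horizontal components of the momentum equation reduce to $0=0$, while the vertical component is precisely \eqref{momentumeq-1D}, and analogously for the continuity and entropy balances.

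Next, following the derivation of the relative entropy inequality in \cite[Chapter 5]{Fei2}, I combine the weak formulations of \eqref{continuityeq-main}--\eqref{entropyeq-main} with the total dissipation balance \eqref{totaldiss} and test against suitable combinations of $(\tilde\rho,\tilde{\u},\tilde\theta)$. This yields
\begin{equation*}
\EE_\ep(\tau) + \int_0^\tau\!\!\int_{\Omega_\ep}\frac{\tilde\theta}{\theta_\ep}\,\sigma_\ep\,dx\,dt \leq \EE_\ep(0) + \int_0^\tau \mathcal{R}_\ep\,dt,
\end{equation*}
where $\EE_\ep(t)=\int_{\Omega_\ep}\bigl(\tfrac12\rho_\ep|\u_\ep-\tilde{\u}|^2+\EE(\rho_\ep,\theta_\ep|\tilde\rho,\tilde\theta)\bigr)dx$ and $\sigma_\ep=\mathbb{S}(\theta_\ep,\nabla_x\u_\ep):\nabla_x\u_\ep-\q(\theta_\ep,\nabla_x\theta_\ep)\cdot\nabla_x\theta_\ep/\theta_\ep$. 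Since $(\tilde\rho,\tilde{\u},\tilde\theta)$ is an exact 3D strong solution, the forcing pieces of $\mathcal{R}_\ep$ coming from the 1D equations cancel, and what remains splits into (i) quadratic-difference contributions controlled pointwise by $\EE_\ep$ via the thermodynamic stability \eqref{thermostabhypo}, and (ii) gradient cross terms that are absorbed into the dissipation integral on the left-hand side by Young's and Korn's inequalities, exploiting $\tilde\rho,\tilde\theta\geq c>0$ and smoothness of the lift.

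The third step is to divide by $|Q_\ep|$, use \eqref{ititiallimit}--\eqref{initlim2} to obtain $|Q_\ep|^{-1}\EE_\ep(0)\to 0$, and close by Gr\"onwall's lemma, giving $|Q_\ep|^{-1}\EE_\ep(\tau)\to 0$ uniformly on $[0,T]$. The three stated limits follow from coercivity of $\EE$: on the ``essential'' set where $(\rho_\ep,\theta_\ep)$ stays in a compact subset of $(0,\infty)^2$, \eqref{thermostabhypo} yields $\EE(\rho,\theta|\tilde\rho,\tilde\theta)\gtrsim(\rho-\tilde\rho)^2+(\theta-\tilde\theta)^2$, while on the ``residual'' set the hypotheses on $P$ together with the $a\theta^4$ radiation pressure give $\EE\gtrsim 1+\rho^{5/3}+\theta^4$ (standard bounds from \cite[Section 3.1]{Fei2}); together these control $|Q_\ep|^{-1}\|\rho_\ep-\tilde\rho\|_{L^{5/3}}^{5/3}$ and $|Q_\ep|^{-1}\|\theta_\ep-\tilde\theta\|_{L^2}^2$. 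The velocity limit in $L^r$, $r<2$, follows from the $L^2(\rho_\ep\,dx\,dt)$ convergence of $\u_\ep-\tilde{\u}$, combined with the uniform (in $\ep$) bound on $|Q_\ep|^{-1/2}\|\nabla_x\u_\ep\|_{L^2_tL^2_x}$ from the dissipation integral, via interpolation.

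The main obstacle is the $\ep$-uniform bookkeeping in $\mathcal{R}_\ep$: algebraically the remainder has the same structure as in the weak--strong uniqueness proof for the 3D Navier-Stokes-Fourier system, but each constant must be shown not to blow up after division by $|Q_\ep|$. The horizontal independence of $(\tilde\rho,\tilde{\u},\tilde\theta)$ simplifies many terms (for instance $\int_{\Omega_\ep} p(\tilde\rho,\tilde\theta)\,\mathrm{div}_x\tilde{\u}\,dx=|Q_\ep|\int_0^1 p(\tilde\rho,\tilde\theta)\,\partial_y\tilde u\,dy$), but the gradient cross terms still require Korn's inequality on $\Omega_\ep=\ep Q\times(0,1)$ with a constant independent of $\ep$; this is available thanks to the slip boundary condition and the rectangular cross-section, which allow extension by reflection to a cube, placing the Korn constant in an $\ep$-independent regime.
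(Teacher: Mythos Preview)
Your strategy is the paper's strategy: lift the 1D solution to a 3D test triple (which, as you correctly observe, is in fact an exact 3D strong solution with the right boundary conditions), plug into the relative entropy inequality, absorb dissipative cross terms, close with Gr\"onwall, and read off the convergences from the coercivity bounds \eqref{entfuncest}. The observation that the lift solves the full 3D system exactly is a clean way of organizing the cancellations the paper carries out by hand in Steps~3 and~7.

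There is, however, a genuine gap in the $\ep$-uniform bookkeeping that goes beyond Korn. You assert that the remainder splits into pieces ``controlled pointwise by $\EE_\ep$'' plus ``gradient cross terms absorbed by Young's and Korn's inequalities''. This fails for the residual part of the entropy-difference term
\[
\int_{\Omega_\ep}\bigl[\rho(s(\rho,\theta)-s(\tilde\rho,\tilde\theta))\bigr]_{res}\,(\tilde u-u^3)\,\partial_y\tilde\theta\,dx.
\]
The residual entropy is only bounded in $L^1$ (or at best $L^{4/3}$) by $\EE$, so you need $\tilde u-u^3$ in the dual space with an $\ep$-independent constant. In the standard 3D weak--strong uniqueness this is done via the Sobolev embedding $W^{1,2}\hookrightarrow L^6$, but on the shrinking domains $\Omega_\ep=\ep Q\times(0,1)$ that embedding constant is \emph{not} uniform in~$\ep$. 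The paper gets around this by a thin-domain-specific splitting $\tilde u-u^3=(\tilde u-(u^3)_{Q_\ep})+((u^3)_{Q_\ep}-u^3)$: the first piece depends only on $y$ and is handled by the one-dimensional embedding $W^{1,1}(0,1)\hookrightarrow L^\infty(0,1)$ applied to the cross-section average, while the second requires an $\ep$-uniform Ladyzhenskaya/Poincar\'e inequality on $Q_\ep$ (the paper's Lemma~\ref{nonepoin}) giving $\|f-(f)_{Q_\ep}\|_{L^4(\Omega_\ep)}\leq c\|\nabla f\|_{L^2(\Omega_\ep)}^{2}$ with $c$ independent of~$\ep$. This second ingredient is on the same footing as the Korn inequality you mention---it is not just a constant to be checked but an anisotropic estimate that exploits the product structure $\ep Q\times(0,1)$---and without it your absorption step does not close uniformly.
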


%\begin{remark}
%Due to estimates of $\mathcal E$ (see Section \ref{precal}), we may immediately derive that
%$$
%\mbox{esssup}_{t\in (0,T)} \frac{1}{|Q_\varepsilon|}\|\rho_\varepsilon - \tilde \rho\|^{\frac 53}_{L^{\frac 53}(\Omega_\varepsilon)}\to 0,
%$$
%$$
%\mbox{esssup}_{t\in (0,T)} \frac{1}{|Q_\varepsilon|}\|\theta_\varepsilon - \tilde \theta \|^2_{L^2(\Omega_\varepsilon)} \to 0
%$$
%and
%$$
%\frac{1}{|Q_\varepsilon|}\|{\bf u}_\varepsilon - \tilde{\bf u}\|^r_{L^r((0,T) \times\Omega_\varepsilon)}\to 0
%$$
%for every $r\in [1,2)$.
%\end{remark}

\begin{remark} \label{Kornlikerem}
Our result can be viewed as an extension of the dimension reduction  for the barotropic Navier-Stokes system achieved in \cite{BeFeNo} to the full Navier-Stokes-Fourier system. The basic strategy of using the relative entropy inequality is the same. However, the presence of the temperature raises new obstacles.

In elasticity theory, the analysis of dimension reduction problems depends on the use of Korn's inequality which controls the gradient of velocity by its symmetric part, i.e.,

$$\|\nabla_x \ve\|_{L^2(\Omega_\ep)} \leq c(\ep) \|\nabla_x \ve + \nabla_x^T \ve\|_{L^2(\Omega_\ep)}, \ \ \ \ve\cdot \n|_{\partial\Omega_\ep} =0.$$ 
There are two problems that arise with respect to above inequality. Firstly, validity  even for a fixed $\ep>0$ requires certain restrictions on the shape of the cross-section $Q$. Secondly, even "properly" shaped $Q$ might not stop the constant $c(\ep)$ from blowing up as $\ep \to 0$. 

In \cite{BeFeNo} authors obtain their result for a regular planar domain $Q$ since they avoid the use of Korn's  inequality by exploring the structural stability of the family of solutions of the barotropic Navier-Stokes system. In our case, the approach of \cite{BeFeNo} is disrupted by the temperature. 

Therefore our result leans on the validity of stronger Korn's like inequality appropriate for  compressible fluids, namely, we use   

$$
\|\nabla_x \ve\|^2_{L^2(\Omega_\ep)}\leq \left\|\nabla_x \ve + \nabla_x^T \ve - \frac 23 \div_x \ve \mathbb{I}\right\|^2_{L^2(\Omega_\ep)},  \ \ \ \ve\cdot \n|_{\partial\Omega_\ep} =0.
$$
To get that we assume that  $Q$ is an open  rectangular domain in $\mathbb R^2$, i.e.,
$Q$ can be written as 

$$Q = (a,b)\times (c,d), \ \ \ a<b, \ c < d, \ a,b,c,d \in \mathbb R.$$
\end{remark}

\section{Preliminary calculations}
\label{precal}
In this section we introduce  the estimates which will be used repeatedly in the subsequent calculations. Hereinafter, we assume that $(\tilde \rho, \tilde u, \tilde \theta)$ and $(\rho_\ep,\u_\ep,\theta_\ep)$ are a classical and a weak solution to the respective problem satisfying the assumptions introduced in Theorem \ref{mainthm}. For clarity, we omit the suffix $\ep$ where no confusion occurs and we write $(\rho,\u,\theta)$ instead of $(\rho_\ep,\u_\ep,\theta_\ep)$.

Following \cite[Chapters 4,5]{Fei2} we introduce essential and residual components based on  $\rho$ and $\theta$. To begin, we choose positive constants $\underline \rho, \overline \rho, \underline \theta, \overline \theta$ fulfilling

$$ 0 < \underline \rho \leq \frac12 \underset{(\tau,y) \in [0,T ]\times [0,1]}\min \tilde \rho (\tau,y) \leq 2 \underset{(\tau,y) \in [0,T ]\times [0,1]}\max \tilde \rho (\tau,y) \leq \overline \rho,$$ 

$$ 0 < \underline \theta \leq \frac12 \underset{(\tau,y) \in [0,T ]\times [0,1]}\min \tilde \theta (\tau,y) \leq 2 \underset{(\tau,y) \in [0,T ]\times [0,1]}\max \tilde \theta (\tau,y) \leq \overline \theta.$$ 

According to Lemma 5.1 in \cite{Fei2} there exists a constant $c>0$ such that 

\eqn{entfuncest} 
$$ \EE(\rho, \theta| \tilde \rho, \tilde \theta) \geq c \left\{ \begin{array}{ll} |\rho - \tilde \rho|^2 + |\theta - \tilde \theta|^2 & \mbox{ if } (\rho, \theta) \in [\underline{\rho}, \overline{\rho}]\times [\underline{\theta}, \overline{\theta}], \\ \\
1 + |\rho s (\rho, \theta)| + \rho e(\rho, \theta) & \mbox{ otherwise}. \end{array} \right.$$
It is worth pointing out that $c$ is independent of $\ep$.

Each measurable function $h$ can be written as 

$$h = h_{ess} + h_{res},$$
where

$$h_{ess}(t,x) = \left\{ \begin{array}{cl} h(t,x) &\mbox{ if } (\rho(t,x), \theta(t,x)) \in [\underline{\rho}, \overline{\rho}]\times [\underline{\theta},\overline{\theta}],\\ \\ 0 &\mbox{ otherwise}. \end{array}\right. %\ \ \ h_{res} = h - h_{ess}.
$$

\subsection{Estimates on $\rho$ and $\theta$}

We immediately see from \eqref{entfuncest} that 

\begin{equation} \label{essest} \| [\theta - \tilde \theta]_{ess}\|_{L^s(\Omega_\varepsilon)}^s  + \| [\rho - \tilde \rho]_{ess} \|_{L^s(\Omega_\varepsilon)}^s 
\leq c\int_{\Omega_\varepsilon} \EE(\rho, \theta|\tilde \rho, \tilde \theta) dx,
\end{equation}
for $s\geq 2$.
We show that similar estimates hold also for the residual parts. Firstly,

\begin{equation*}
 \rho e(\rho, \theta) \geq c (\rho^\frac53 +  \theta^4).
\end{equation*}
Since $\displaystyle\frac{P(z)}{z^\frac53}$ is decreasing and $\displaystyle \lim_{z\to \infty} \frac{P(z)}{z^\frac53} = P_\infty$ we obtain $$P(z) \geq P_\infty z^\frac53.$$
Using the above estimate we get

$$ \rho e(\rho, \theta) \geq \frac32 \theta^\frac52 P_\infty \left(\frac{\rho}{\theta^\frac32}\right)^\frac53 +  a \theta^4 \geq c(  \rho^\frac53 + \theta^4).$$  
Now we can estimate the residual parts. We have for  $1\leq  q \leq 4$ that

$$ \int_{\Omega_\varepsilon} | [\theta -\tilde \theta ]_{res} |^q  dx   \leq  c \int_{\Omega_\varepsilon} (| [\theta ]_{res} |^q +1_{res} ) dx ,$$
and by the use of H\" older and Young inequalities together with \eqref{entfuncest} we obtain

$$\leq c \left(\int_{\Omega_\varepsilon} | [\theta ]_{res} |^4 dx\right)^\frac{q}4 \left(\int_{\Omega_\varepsilon}  1_{res}  dx\right)^{1-\frac{q}4} + c\int_{\Omega_\varepsilon} \EE(\rho, \theta|\tilde \rho, \tilde \theta)  dx $$

$$\leq c   \int_{\Omega_\varepsilon} | [\theta ]_{res} |^4 dx  + c\int_{\Omega_\varepsilon} \EE(\rho, \theta|\tilde \rho, \tilde \theta).$$
Thus 
\begin{equation}
\label{resestheta} \int_{\Omega_\varepsilon} | [\theta -\tilde \theta ]_{res} |^q  dx  \leq  c\int_{\Omega_\varepsilon} \EE(\rho, \theta|\tilde \rho, \tilde \theta) dx ,
\end{equation}
where $1 \leq q \leq 4$. 
Combining \eqref{essest} and \eqref{resestheta} we obtain 

\begin{equation}
\label{esthetanon} \| \theta -\tilde \theta  \|_{L^2(\Omega_\ep)}^2    \leq  c\int_{\Omega_\varepsilon} \EE(\rho, \theta|\tilde \rho, \tilde \theta) dx  .
\end{equation}

As for the density, we get analogously as above that for  $1 \leq p \leq \frac53$ it holds 

$$   \int_{\Omega_\ep} | [\rho - \tilde \rho]_{res} |^p dx  \leq c \int_{\Omega_\varepsilon} \left(|[ \rho ]_{res}|^p  + 1_{res}\right)dx  $$

$$\leq c \left(\int_{\Omega_\varepsilon} | [\rho ]_{res} |^\frac53 dx\right)^\frac{3p}5 \left(\int_{\Omega_\varepsilon} 1_{res}  dx\right)^{1-\frac{3p}5} +c \int_{\Omega_\varepsilon} \EE(\rho, \theta|\tilde \rho, \tilde \theta) dx $$

$$ \leq c  \int_{\Omega_\varepsilon}| [\rho ]_{res} |^\frac53 dx  + c \int_{\Omega_\varepsilon} \EE(\rho, \theta|\tilde \rho, \tilde \theta) dx.$$
Hence, we have

\eqn{resestrho} $$ 
\int_{\Omega_\ep} | [\rho - \tilde \rho]_{res} |^p dx  \leq c \int_{\Omega_\varepsilon} \EE(\rho, \theta|\tilde \rho, \tilde \theta) dx,
$$
for $1 \leq p \leq \frac53$.

\subsection{Korn and Poincar\'e inequalities}
Although both inequalities are very well known, we need the estimates which are independent of $\ep$. Note that this goal cannot be reached by a simple rescaling argument as the domain shrinks only in two dimensions. For more details see Remark \ref{Kornlikerem}.

For clarity,  in this section we prefer  the notation $x_3$ and $\partial_{ x_3}$ instead of $y$ and $\partial_y$. 

\begin{lemma}\label{kornlike}
Let $\Omega\subset \mathbb R^3$ be a rectangular domain, i.e., $\Omega = (a_1,b_1)\times (a_2,b_2)\times (a_3,b_3)$ and  $\u\in W^{1,2}(\Omega,\mathbb R^3)$ be such that $\u\cdot\n = 0$ on $\partial \Omega$. Then
$$
\|\nabla_x \u\|^2_{L^2(\Omega)}\leq \left\|\nabla_x \u + \nabla_x^T \u\right\|^2_{L^2(\Omega)},
$$

$$
\|\nabla_x \u\|^2_{L^2(\Omega)}\leq \left\|\nabla_x \u + \nabla_x^T \u - \frac 23 \div_x \u \mathbb{I}\right\|^2_{L^2(\Omega)},
$$

$$\|\nabla_x \u\|^2_{L^2(\Omega)} \leq \int_\Omega (\nabla_x \u + \nabla_x^T  \u - \frac23 \div_x \u \mathbb{I}):\nabla_x \u dx.$$
\end{lemma}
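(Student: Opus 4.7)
The plan is to reduce all three inequalities to the single integration-by-parts identity
$$\int_\Omega \de_j u_i \, \de_i u_j \dx = \int_\Omega (\div_x \u)^2 \dx, \qquad (\star)$$
where summation over repeated indices is implicit. By density of smooth vector fields satisfying $\u\cdot\n=0$ in $\{\ve\in W^{1,2}(\Omega;\R^3):\ve\cdot\n=0\}$, I may assume $\u$ is classically smooth up to the boundary, so every integration by parts below is justified.

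To prove $(\star)$ I integrate by parts twice. Moving $\de_j$ off $u_i$ yields
$$\int_\Omega \de_j u_i\,\de_i u_j \dx = -\int_\Omega u_i\,\de_i(\div_x \u)\dx + \int_{\de\Omega} u_i(\de_i u_j)n_j \dS,$$
and a second integration by parts applied to the interior term gives
$$-\int_\Omega u_i\,\de_i(\div_x\u)\dx = \int_\Omega(\div_x\u)^2 \dx - \int_{\de\Omega}(\u\cdot\n)\,\div_x\u \dS.$$
The last boundary integral vanishes by the impermeability hypothesis. For the remaining boundary term I use the box structure: on the face $\{x_k=\mathrm{const}\}$ one has $n_j=\pm\delta_{jk}$, so the integrand collapses to $\pm u_i\,\de_i u_k$. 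On this face $u_k\equiv 0$, which kills every tangential derivative $\de_\ell u_k$ with $\ell\ne k$, while the surviving term $u_k\de_k u_k$ vanishes by $u_k=0$. Thus both boundary contributions disappear and $(\star)$ follows. This boundary step is the crux of the lemma and the main obstacle: on a generic cross-section $Q$ one would generate extra curvature terms on $\de\Omega$ that would force the constants to depend on $\ep$, which is precisely what must be avoided in view of Remark \ref{Kornlikerem}.

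With $(\star)$ in hand, each of the three inequalities follows by a short algebraic expansion. First,
$$\bigl\|\nabla_x\u+\nabla_x^T\u\bigr\|^2_{L^2(\Omega)} = 2\|\nabla_x\u\|^2_{L^2(\Omega)} + 2\|\div_x\u\|^2_{L^2(\Omega)},$$
which already exceeds $\|\nabla_x\u\|^2_{L^2(\Omega)}$. Using in addition $(\nabla_x\u+\nabla_x^T\u):\mathbb I=2\div_x\u$ and $\mathbb I:\mathbb I=3$ one obtains
$$\Bigl\|\nabla_x\u+\nabla_x^T\u-\tfrac{2}{3}\div_x\u\,\mathbb I\Bigr\|^2_{L^2(\Omega)} = 2\|\nabla_x\u\|^2_{L^2(\Omega)} + \tfrac{2}{3}\|\div_x\u\|^2_{L^2(\Omega)},$$
and
$$\int_\Omega\bigl(\nabla_x\u+\nabla_x^T\u-\tfrac{2}{3}\div_x\u\,\mathbb I\bigr):\nabla_x\u \dx = \|\nabla_x\u\|^2_{L^2(\Omega)} + \tfrac{1}{3}\|\div_x\u\|^2_{L^2(\Omega)},$$
both of which dominate $\|\nabla_x\u\|^2_{L^2(\Omega)}$ since $\|\div_x\u\|^2_{L^2(\Omega)}\ge 0$. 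The constants being independent of the lengths $b_i-a_i$, the bound is uniform in $\ep$.
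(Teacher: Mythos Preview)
Your proof is correct and follows essentially the same strategy as the paper: reduce to smooth fields by density, exploit the box geometry and the condition $\u\cdot\n=0$ to make the boundary terms from integration by parts vanish, and then expand algebraically. The only difference is organisational: the paper treats each off-diagonal pair separately, proving $\int_\Omega \de_{x_j} u^i\,\de_{x_i} u^j\,dx = \int_\Omega \de_{x_i} u^i\,\de_{x_j} u^j\,dx$ for $i\neq j$, whereas you package all of this into the single identity $(\star)$, which makes the subsequent expansions slightly cleaner.
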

\begin{proof}
Since smooth functions are dense in $W^{1,2}(\Omega,\mathbb R^3)$, we prove the lemma only for $\u \in C^2(\Omega)$. Denote $\u = [u^1,u^2,u^3]$.
We split  $\partial \Omega$ into three parts as follows:
$$\partial \Omega_1 = \{a_1\}\times[a_2,b_2]\times [a_3,b_3] \cup \{b_1\}\times[a_2,b_2]\times [a_3,b_3],$$

$$\partial \Omega_2 = [a_1,b_1]\times\{a_2\}\times  [a_3,b_3] \cup [a_1,b_1]\times\{b_2\}\times [a_3,b_3],$$

$$\partial \Omega_3 = [a_1,b_1]\times [a_2,b_2]\times\{a_3\} \cup [a_1,b_1]\times [a_2,b_2]\times\{b_3\}.$$

Therefore,  boundary conditions on $\u$ imply:

\begin{equation}u^1 |_{\partial\Omega_1} =0, \ \ \ u^2 |_{\partial\Omega_2} =0, \ \ \ u^3 |_{\partial\Omega_3} =0.\end{equation}

Since 

$$ (\nabla_x \u + \nabla_x^T  \u) =\left(\begin{array}{ccc} 2\partial_{x_1} u^1 & \partial_{x_2}u^1 + \partial_{x_1} u^2 & \partial_{x_3} u^1 + \partial_{x_1}u^3 \\
 \partial_{x_1} u^2 + \partial_{x_2}u^1 & 2\partial_{x_2}u^2 & \partial_{x_3}u^2 + \partial_{x_2} u^3 \\
\partial_{x_1}u^3 + \partial_{x_3} u^1& \partial_{x_2}u^3 + \partial_{x_3} u^2& 2 \partial_{x_3}u^3\end{array}\right),$$
we calculate

$$(\nabla_x \u + \nabla_x^T  \u):(\nabla_x \u + \nabla_x^T  \u) =|\nabla_x \u|^2 + 3\left[(\partial_{x_1} u^1)^2 + (\partial_{x_2} u^2)^2 + (\partial_{x_3} u^3)^2\right]$$

$$+ 4 \left[ \partial_{x_2} u^1\partial_{x_1} u^2 + \partial_{x_3} u^1\partial_{x_1} u^3+ \partial_{x_3} u^2\partial_{x_2} u^3\right].$$
We integrate by parts to modify the last terms.

First, we integrate by parts in $x_2$,

$$\int_\Omega \partial_{x_2} u^1\partial_{x_1} u^2  dx = \int_{a_1}^{b_1} \int_{a_3}^{b_3}\left[u^1 \partial_{x_1}u^2\right]_{a_2}^{b_2}dx_3  dx_1- \int_\Omega u^1 \partial_{x_2} \partial_{x_1} u^2 dx.$$
 From the  boundary condition $u^2|_{\partial\Omega_2} = 0$ we get $\partial_{x_1} u^2|_{\partial\Omega_2} = 0$ and hence the boundary term above disappears. Next, we integrate by parts in $x_1$,

$$ - \int_\Omega u^1  \partial_{x_1}\partial_{x_2}  u^2 dx  = \int_{a_2}^{b_2} \int_{a_3}^{b_3}\left[u^1 \partial_{x_2}u^2\right]_{a_1}^{b_1}dx_3 dx_2 +
\int_\Omega\partial_{x_1} u^1 \partial_{x_2}  u^2 dx.$$ 
The boundary term above disappears due to $u^1 |_{\partial\Omega_1} =0$ and  we end up with 

$$\int_\Omega \partial_{x_2} u^1\partial_{x_1} u^2  dx  =\int_\Omega \partial_{x_1} u^1 \partial_{x_2}  u^2 dx.$$

The rest of the terms is treated analogously and we get 

$$\int_\Omega \partial_{x_3} u^1\partial_{x_1} u^3  dx  =\int_\Omega \partial_{x_1} u^1 \partial_{x_3}  u^3 dx,$$
and

$$\int_\Omega \partial_{x_3} u^2\partial_{x_2} u^3  dx  =\int_\Omega \partial_{x_2} u^2 \partial_{x_3}  u^3 dx.$$

Finally,

\begin{align*}
&\int_\Omega (\nabla_x \u + \nabla_x^T  \u):(\nabla_x \u + \nabla_x^T  \u)dx = \int_\Omega|\nabla_x \u|^2dx \\
& \quad + 3\int_\Omega \left[(\partial_{x_1} u^1)^2 + (\partial_{x_2} u^2)^2 + (\partial_{x_3} u^3)^2\right]dx \\
& \quad + 4 \int_\Omega  \left[ \partial_{x_1} u^1\partial_{x_2} u^2 + \partial_{x_1} u^1\partial_{x_3} u^3+ \partial_{x_2} u^2\partial_{x_3} u^3\right]dx,
\end{align*}
and since
$$ (\div_x \u)^2 = \sum_{i=1}^3 (\partial_{x_i} u^i)^2 + 2[\partial_{x_1} u^1\partial_{x_2} u^2 + \partial_{x_1} u^1\partial_{x_3} u^3+ \partial_{x_2} u^2\partial_{x_3} u^3],$$
we get that 

$$\int_\Omega (\nabla_x \u + \nabla_x^T  \u):(\nabla_x \u + \nabla_x^T  \u)dx = \int_\Omega|\nabla_x \u|^2dx + 2 \int_\Omega (\div_x \u)^2 dx $$

$$+  \int_\Omega \left[(\partial_{x_1} u^1)^2 + (\partial_{x_2} u^2)^2 + (\partial_{x_3} u^3)^2\right]dx .$$
Thus the first inequality is proven. The second inequality follows easily since 

\begin{align*}
&(\nabla_x \u + \nabla_x^T  \u - \frac23 \div_x \u \mathbb{I}):(\nabla_x \u + \nabla_x^T  \u - \frac23 \div_x \u \mathbb{I}) \\
&\quad = (\nabla_x \u + \nabla_x^T  \u):(\nabla_x \u + \nabla_x^T  \u) - \frac43 (\div_x \u)^2. 
\end{align*}

We use the integration by parts in the same way as before in order to get

$$\int_\Omega (\nabla_x \u + \nabla_x^T  \u - \frac23 \div_x \u \mathbb{I}):\nabla_x \u dx = \int_\Omega |\nabla_x \u|^2dx +\frac13 \int_\Omega(\div_x \u)^2 dx,$$
which directly implies the last desired inequality.
\end{proof}

We introduce the following notation. For a set $M\subset \mathbb R^d$, $d\in \mathbb N$ and a function $f\in L^1(M)$ we denote by $(f)_M$ its integral average, i.e., $$(f)_M = \frac 1{|M|}\int_M f(x) dx.$$

\begin{lemma}\label{nonepoin}
There exists a constant $c>0$ independent of $\ep$ such that 
 for every $f\in W^{1,2}(\Omega_\varepsilon)$ fulfilling $f(\cdot, 0) = f(\cdot, 1)  = 0$ on $Q_\ep$ it holds that

$$
\int_0^1 \int_{Q_\ep} |f(x_h,y)-(f)_{Q_\ep}(y)|^4dx_h dy = \|f- (f)_{Q_\ep}\|^4_{L^4(\Omega_\ep)}\leq c \|\nabla f\|^4_{L^2(\Omega_\ep)}.
$$

\end{lemma}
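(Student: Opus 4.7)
The plan is to split $g := f - (f)_{Q_\varepsilon}$ into transverse and longitudinal behaviour and combine a scale-invariant two-dimensional Ladyzhenskaya estimate on the cross-section with a one-dimensional bound in $y$ that exploits the boundary conditions on $f$.

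First I would record the elementary properties of $g$: by construction $\int_{Q_\varepsilon} g(x_h, y)\,dx_h = 0$ for every $y$; since $f(\cdot,0)=f(\cdot,1)=0$ on $Q_\varepsilon$ we have $(f)_{Q_\varepsilon}(0) = (f)_{Q_\varepsilon}(1) = 0$, hence $g(\cdot, 0) = g(\cdot, 1) = 0$; and $\nabla_{x_h} g = \nabla_{x_h} f$ while $\partial_y g = \partial_y f - (\partial_y f)_{Q_\varepsilon}$, so by $L^2(Q_\varepsilon)$-orthogonality of a function and its horizontal mean we obtain $\|\nabla g\|_{L^2(\Omega_\varepsilon)} \leq \|\nabla f\|_{L^2(\Omega_\varepsilon)}$.

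The first main ingredient is a scale-invariant Ladyzhenskaya inequality on the cross-section: for every zero-mean $g(\cdot, y)$ on $Q_\varepsilon$,
$$\|g(\cdot, y)\|_{L^4(Q_\varepsilon)}^4 \leq C \|g(\cdot, y)\|_{L^2(Q_\varepsilon)}^2 \|\nabla_{x_h} g(\cdot, y)\|_{L^2(Q_\varepsilon)}^2,$$
with $C$ independent of $\varepsilon$. The proof is by rescaling: set $\hat g(\hat x_h) := g(\varepsilon \hat x_h)$ on the fixed rectangle $Q$. Then $\hat g$ still has zero mean on $Q$, so the standard 2D Ladyzhenskaya inequality applies with a fixed constant. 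A direct change of variables gives $\|\hat g\|_{L^4(Q)}^4 = \varepsilon^{-2}\|g\|_{L^4(Q_\varepsilon)}^4$, $\|\hat g\|_{L^2(Q)}^2 = \varepsilon^{-2}\|g\|_{L^2(Q_\varepsilon)}^2$, and $\|\nabla_{\hat x_h}\hat g\|_{L^2(Q)}^2 = \|\nabla_{x_h} g\|_{L^2(Q_\varepsilon)}^2$, so the $\varepsilon^{-2}$-factors cancel. The second ingredient is a uniform-in-$y$ bound in $x_h$: since $g(x_h, 0) = 0$, the fundamental theorem and the Cauchy–Schwarz inequality yield $|g(x_h, y)|^2 \leq \|\partial_y g(x_h, \cdot)\|_{L^2(0,1)}^2$ for every $x_h$, and integration over $Q_\varepsilon$ gives
$$\sup_{y \in (0,1)} \|g(\cdot, y)\|_{L^2(Q_\varepsilon)}^2 \leq \|\partial_y g\|_{L^2(\Omega_\varepsilon)}^2.$$

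Combining the two estimates, integration in $y$ produces
$$\|g\|_{L^4(\Omega_\varepsilon)}^4 \leq C \sup_{y} \|g(\cdot, y)\|_{L^2(Q_\varepsilon)}^2 \int_0^1 \|\nabla_{x_h} g(\cdot, y)\|_{L^2(Q_\varepsilon)}^2\,dy \leq C \|\partial_y g\|_{L^2(\Omega_\varepsilon)}^2 \|\nabla_{x_h} g\|_{L^2(\Omega_\varepsilon)}^2 \leq C\|\nabla f\|_{L^2(\Omega_\varepsilon)}^4,$$
which is the claimed estimate. The main obstacle is the $\varepsilon$-independence of the constant: standard Poincaré and Ladyzhenskaya constants on a thin rectangle depend on its diameter, and the only reason the rescaling argument is clean enough to survive the limit $\varepsilon \to 0$ is that $g$ has zero horizontal mean — this makes the 2D Ladyzhenskaya inequality invariant under the anisotropic rescaling, while the vertical boundary conditions allow the remaining shrinking-in-$x_h$ loss to be absorbed by a fixed-scale estimate in $y$.
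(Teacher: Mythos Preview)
Your proof is correct and follows essentially the same strategy as the paper: apply the scale-invariant 2D Ladyzhenskaya inequality on the cross-section $Q_\varepsilon$ (justified by rescaling), integrate in $y$, and control the resulting $L^\infty_y L^2_{x_h}$ norm by a one-dimensional Poincar\'e/Sobolev estimate using the vanishing of the function at $y=0,1$. The only cosmetic difference is bookkeeping: the paper bounds $\|f-(f)_{Q_\varepsilon}\|_{L^2(Q_\varepsilon)}$ crudely by $c\|f\|_{L^2(Q_\varepsilon)}$ and then applies the 1D estimate to $f$, whereas you observe that $g=f-(f)_{Q_\varepsilon}$ itself vanishes at the endpoints, apply the 1D estimate directly to $g$, and close with $\|\nabla g\|_{L^2}\le\|\nabla f\|_{L^2}$ --- a slightly cleaner route to the same conclusion.
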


\begin{proof}
From Ladyzhenskaya's inequality (see \cite{ladyzhenskaya}) we have 

$$
\|f-(f)_{Q_\ep}\|^4_{L^4(Q_\varepsilon)}\leq c \|\nabla_{x_h} f\|_{L^2(Q_\varepsilon)}^2 \|f-(f)_{Q_\ep}\|_{L^2(Q_\varepsilon)}^2.
$$
with the constant $c$ independent of $\varepsilon$. Indeed, as this is a 2D inequality, the independence can be shown by a simple rescaling argument. Further, 

\begin{align*}
&\|f-(f)_{Q_\ep}\|_{L^4(\Omega_\ep)}^4 = \int_0^1 \|f-(f)_{Q_\ep}\|_{L^4(Q_\varepsilon)}^4dy \\
&\quad \leq c \int_0^1 \|\nabla_{x_h} f\|_{L^2(Q_\varepsilon)}^2\|f-(f)_{Q_\ep}\|_{L^2(Q_\varepsilon)}^2dy \\ 
&\quad \leq c\|f-(f)_{Q_\ep}\|^2_{L^\infty((0,1),L^2(Q_\varepsilon))} \|\nabla f\|_{L^2(\Omega_\varepsilon)}^2\leq c \|\nabla f\|_{L^2(\Omega_\varepsilon)}^4,
\end{align*}
where we used the fact that, due to H\"older inequality,

\begin{multline*}
\|f-(f)_{Q_\ep}\|_{L^2(Q_\ep)}^2 \leq \int_{Q_\ep} \left|f(x_h) - \frac 1{|Q_\ep|} \int_{Q_\ep}f(z_h)dz_h\right|^2 dx_h \\
\leq \int_{Q_\ep} \frac 1{|Q_\ep|} \int_{Q_\ep} |f(x_h) - f(z_h)|^2dz_h dx_h\\
\leq c\frac 1{|Q_\ep|} \left(\int_{Q_\ep} \int_{Q_\ep} |f(x_h)|^2dx_hdz_h  + \int_{Q_\ep} \int_{Q_\ep} |f(z_h)|^2dx_hdz_h \right)\\
 \leq c\|f\|_{L^2(Q_\ep)}^2,
\end{multline*}
for a.a. $y\in (0,1)$ and thus by Sobolev-Poincar\'e inequality we finally obtain

\begin{multline*}
\|f-(f)_{Q_\ep}\|^2_{L^\infty((0,1),L^2(Q_\varepsilon))}\leq c \|f\|_{L^\infty((0,1),L^2{(Q_\ep)})}^2\\ \leq c\|\partial_{x_3} f\|_{L^2((0,1),L^2(Q_\ep))}^2\leq c \|\nabla f\|_{L^2(\Omega_\ep)}^2.
\end{multline*}
\end{proof}

\section{Relative entropy balance}

Recall that we omit the index $\ep$ for the functions $(\rho_\ep,\u_\ep,\theta_\ep)$ and simply write $(\rho,\u,\theta)$ instead. Following the calculations in \cite[Section 3]{Fei1} we obtain for each $\ep >0$ the following relative entropy inequality satisfied by any weak solution $(\rho, \u,\theta)$ to the Navier-Stokes-Fourier system on $\Omega_\ep$ and any trio $(r,\U, \Theta)$ of smooth functions, $r$ and $\Theta$ bounded below away from zero in $[0,T]\times \Omega_\ep$ and $\U\cdot \n|_{\partial \Omega_{\ep}}=0$.

$$\frac1{|Q_\ep|} \int_{\Omega_\ep} \left(\frac12 \rho | \u - \U|^2 + \EE(\rho, \theta|r,\Theta)\right)(\tau, \cdot )dx $$

$$+ \frac1{|Q_\ep|} \int_0^\tau \int_{\Omega\ep} \frac{\Theta}{\theta}\left(\S(\theta, \nabla_x \u):\nabla_x \u - \frac{\q(\theta, \nabla_x \theta)\cdot \nabla_x \theta}{\theta}\right)dx dt $$

$$\leq\frac1{|Q_\ep|}\int_{\Omega_\ep} \left(\frac1{2\rho_{0,\ep}}|(\rho\u)_{0,\ep} - \rho_{0,\ep}\U(0,\cdot)|^2 + \EE(\rho_{\ep,0}, \theta_{\ep,0}|r(0,\cdot), \Theta(0,\cdot))\right) dx$$

$$+ \frac1{|Q_\ep|}\int_0^\tau \int_{\Omega_\ep} \rho(\u - \U)\cdot \nabla_x \U \cdot(\U-\u)dx dt$$

$$ + \frac1{|Q_\ep|}\int_0^\tau \int_{\Omega_\ep} \rho (s(\rho, \theta) - s(r, \Theta))(\U - \u)\cdot \nabla_x \Theta dx dt $$

$$+\frac1{|Q_\ep|}\int_0^\tau \int_{\Omega_\ep} (\rho (\de_t \U + \U \cdot \nabla_x \U)\cdot (\U-\u) - p(\rho, \theta)\div_x \U + \S (\theta, \nabla_x \u):\nabla_x \U)dx dt$$

$$-\frac1{|Q_\ep|}\int_0^\tau \int_{\Omega_\ep} \left(\rho(s (\rho, \theta) - s (r,\Theta))(\de_t \Theta +\U \cdot \nabla_x\Theta ) + \frac{\q (\theta, \nabla_x\theta)}{\theta} \cdot \nabla_x \Theta \right) dxdt$$

\eqn{relentineq} 
$$ + \frac1{|Q_\ep|}\int_0^\tau \int_{\Omega_\ep} \left(\left(1-\frac\rho{r}\right)\de_t p(r, \Theta) - \frac\rho{r} \u \cdot \nabla_x p (r, \Theta)\right)dx dt,
$$
for almost all $\tau \in [0,T]$, where $\EE$ was introduced in \eqref{entfunc}. 

To prove Theorem \ref{mainthm} we take   
$$r = \tilde \rho(t,y), \ \ \ \Theta = \tilde \theta(t,y), \ \ \ \U  = \tilde \u(t,y) = \left[\begin{array}{c} 0 \\ 0 \\ \tilde u(t,y)\end{array}\right],$$
in  \eqref{relentineq} to obtain 

$$ \frac1{|Q_\ep|}\int_{\Omega_\ep} \left(\frac12 \rho | \u - \tilde \u|^2 + \EE(\rho, \theta|\tilde \rho,\tilde\theta)\right)(\tau , \cdot )dx $$

$$+\frac1{|Q_\ep|} \int_0^\tau  \int_{\Omega\ep} \frac{\tilde \theta}{\theta}\left(\S(\theta, \nabla_x \u):\nabla_x \u - \frac{\q(\theta, \nabla_x \theta)\cdot \nabla_x \theta}{\theta}\right)dx dt$$

$$\leq \frac1{|Q_\ep|} \int_{\Omega_\ep} \left(\frac1{2\rho_{0,\ep}}|(\rho\u)_{0,\ep} - \rho_{0,\ep}\tilde{\u}_0|^2 + \EE(\rho_{0,\ep}, \theta_{0,\ep}|\tilde{\rho}_0, \tilde{\theta}_0)\right) dx$$

$$+ \frac1{|Q_\ep|} \int_0^\tau  \int_{\Omega_\ep} \rho|u^3 - \tilde u|^2| \de_y \tilde u |dx dt + \frac1{|Q_\ep|}\int_0^\tau  \int_{\Omega_\ep} \rho (s(\rho, \theta) - s(\tilde \rho, \tilde \theta))(\tilde u -  u^3)\cdot \partial_y \tilde \theta dx dt $$

$$+\frac1{|Q_\ep|}\int_0^\tau  \int_{\Omega_\ep} \left(\rho (\de_t \tilde u + \tilde u  \partial_y \tilde u) (\tilde u-u^3) - p(\rho,  \theta)\partial_y \tilde u + \S (\theta, \nabla_x \u):\nabla_x \tilde \u\right)dx dt$$

$$-\frac1{|Q_\ep|}\int_0^\tau  \int_{\Omega_\ep} \left(\rho(s (\rho,  \theta) - s (\tilde \rho,\tilde \theta))(\de_t \tilde \theta + \tilde u \partial_y \tilde \theta)+ \frac{\q (\theta, \nabla_x \theta)}{\theta} \cdot \nabla_x \tilde \theta \right) dxdt$$

\eqn{relentineq2} $$+ \frac1{|Q_\ep|}\int_0^\tau  \int_{\Omega_\ep} \left(\left(1-\frac{\rho}{\tilde \rho}\right)\de_t p(\tilde \rho, \tilde \theta) - \frac\rho{\tilde \rho} \u \cdot \nabla_x p (\tilde \rho, \tilde \theta)\right)dx dt,
$$
for almost all $\tau \in [0,T]$.

In order to handle the integrals on the right-hand side of \eqref{relentineq2} we proceed in several steps:

\bigskip
\noindent{\it Step 0}  
Observe that by \eqref{ititiallimit} and \eqref{initlim2} {we get} 

\eqn{step0finaleq}
$$\frac1{|Q_\ep|} \int_{\Omega_\ep} \left(\frac1{2\rho_{0,\ep}}|(\rho\u)_{0,\ep} - \rho_{0,\ep}\tilde{\u}_0|^2 + \EE(\rho_{0,\ep}, \theta_{0,\ep}|\tilde{\rho}_0, \tilde{\theta}_0)\right) dx \to 0,$$
as $\ep \to 0$. From now on, we include this term in $\Gamma(\ep)$ where $\Gamma(\ep) \to 0$ as $\ep \to 0$.

\bigskip
\noindent{\it Step 1}   

\eqn{step1finaleq} $$
 \int_{\Omega_\ep} \rho|u^3 - \tilde u|^2| \de_y \tilde  u  |dx \leq  2 \|\de_y   \tilde u\|_{L^\infty(0,1)}   \int_{\Omega_\ep} \frac12 \rho |u^3 - \tilde u |^2 dx. 
$$

\bigskip
\noindent{\it Step 2} 

$$\left|\int_{\Omega_\varepsilon} \rho \left(s(\rho, \theta) - s (\tilde \rho, \tilde \theta)\right) (\tilde u - u^3)\partial_y \tilde \theta dx\right|$$

$$\leq \|\de_y \tilde \theta \|_{L^\infty(0,1)} \left[\overline \rho \int_{\Omega_\ep} \left|\left[ s (\rho, \theta)  - s (\tilde \rho, \tilde \theta )\right]_{ess}\right||u^3 - \tilde u|dx \right.$$

$$\left. +  \int_{\Omega_\ep} \left|\left[ \rho\left(s (\rho, \theta)  - s (\tilde \rho, \tilde \theta )\right)\right]_{res}\right||u^3 - \tilde u|dx\right].$$

First, we estimate the essential part. Since
$$\left|\left[s (\rho, \theta) - s (\tilde \rho, \tilde \theta)\right]_{ess} \right|= \left| \left[S\left(\frac{ \rho}{\theta^\frac32}\right) +  \frac{4a}3 \frac{\theta^3}\rho - S\left(\frac{\tilde \rho}{\tilde \theta^\frac32}\right) -  \frac{4a}3 \frac{\tilde \theta^3}{\tilde \rho}\right]_{ess}\right|$$

$$\leq \left| \left[ S\left(\frac{\rho}{\theta^\frac32}\right) - S\left(\frac{\tilde \rho}{\tilde \theta^\frac32}\right)\right]_{ess} \right|  +   \frac{4a}3 \left| \left[   \frac{\theta^3}\rho  -  \frac{\tilde \theta^3}{\tilde \rho}\right]_{ess}\right|$$

$$\leq  |S'\left( \xi\right) | \left|\left[\frac{\rho}{\theta^\frac32} - \frac{\tilde \rho}{\tilde \theta^\frac32}\right]_{ess}\right|  +   \frac{4a}3 \left| \left[   \frac{\theta^3}\rho  -  \frac{\tilde \theta^3}{\tilde \rho}\right]_{ess}\right|\leq c|[\rho - \tilde \rho]_{ess}| +  c |[\theta- \tilde \theta]_{ess}|, $$ 
thus,

\begin{multline*}
\int_{\Omega_\ep} \left|\left[ s (\rho, \theta)  - s (\tilde \rho, \tilde \theta )\right]_{ess}\right||u^3 - \tilde u|dx \\
 \leq K(\cdot) \int_{\Omega_\ep} \left(\frac12 \rho |u^3 - \tilde u |^2 + \EE(\rho, \theta|\tilde \rho, \tilde \theta) \right)dx.
\end{multline*}
Here and hereafter, $K(\cdot)$ is a generic constant depending on $\tilde \rho$, $\tilde u$, $\tilde \theta$, $\underline \rho$, $\underline \theta$ through its respective norms. It is independent of $\varepsilon$.

Next we treat the residual part. We compute

\begin{multline*} 
\int_{\Omega_\varepsilon} \left|\left[\rho(s(\rho,\theta) - s(\tilde \rho,\tilde \theta))\right]_{{res}}\right||\tilde u - u^3| dx\\ \leq
\int_{\Omega_\varepsilon} \left|\left[\rho(s(\rho,\theta) - s(\tilde \rho,\tilde \theta))\right]_{{res}}\right||\tilde u - (u^3)_{Q_\ep}| dx\\ + 
\int_{\Omega_\varepsilon} \left|\left[\rho(s(\rho,\theta) - s(\tilde \rho,\tilde \theta))\right]_{{res}}\right||(u^3)_{Q_\ep} - u^3| dx= \mathcal I_1 + \mathcal I_2.
\end{multline*}
It holds

$$
\mathcal I_1 \leq \|\tilde u - (u^3)_{Q_\ep}\|_{L^\infty(\Omega_\varepsilon)}\|[\rho(s(\rho,\theta) - s(\tilde \rho,\tilde\theta))]_{res}\|_{L^{\frac 43}(\Omega_\ep)} |Q_\ep|^{\frac 14}.
$$

Further,

\begin{multline}\label{residualest}
|[\rho(s(\rho,\theta) - s(\tilde \rho,\tilde \theta)]_{res}|\leq c[\rho + \rho s(\rho,\theta)]_{res}\\ \leq c[\rho  + \theta^3 + \rho[\log \theta]^{+} +\rho|\log \rho|]_{res},
\end{multline}
(cf. Equation (3.39) in \cite{Fei2}). Each term on the right hand side of \eqref{residualest} can be estimated using \eqref{resestheta} and \eqref{resestrho} as follows:

$$\left(\int_{\Omega_\ep} \rho_{res}^{\frac 43}dx\right)^{\frac 34}\leq \left(\int_{\Omega_\ep}\rho_{res}^{\frac 53}dx\right)^{\frac 35} |Q_\ep|^{\frac 3{20}} \leq \left(\frac 1{|Q_\ep|}\int \EE(\rho,\theta|\tilde\rho,\tilde \theta)dx\right)^{\frac 35}|Q_\ep|^{\frac 34},$$

$$
\left(\int_{\Omega_\ep} \theta_{res}^4dx\right)^{\frac 34}\leq \left(\frac 1{|Q_\ep|}\int_{\Omega_\ep} \EE(\rho,\theta|\tilde \rho,\tilde \theta)dx\right)^{\frac 34} |Q_\ep|^{\frac 34},
$$

\begin{multline*}
\left(\int_\ep (\rho[\log \theta]_{res}^+)^{\frac 43}dx \right)^{\frac 34} \leq c\left(\int_{\Omega_\ep} \rho_{res}^{\frac 43} \theta_{res}^{\frac 45}dx\right)^{\frac 34}\\ \leq c \left(\int_{\Omega_\ep} \rho_{res}^{\frac 53}dx\right)^{\frac 35}\left(\int_{\Omega_\ep} \theta_{res}^4dx\right)^{\frac 3{20}}\\
\leq \left(\frac 1{|Q_\ep|}\int_{\Omega_\ep}\EE(\rho,\theta|\tilde \rho,\tilde \theta)dx\right)^{\frac 34}|Q_\ep|^{\frac 34},
\end{multline*}

\begin{multline*}
\left(\int_{\Omega_\ep} (|\rho \log \rho|_{res})^{\frac43}dx\right)^{\frac 34} \leq c \left(\int_{\Omega_\ep}( \rho^{\frac 43} + \rho^{\frac 53})dx\right)^{\frac 34}\\  \leq \left(\frac 1{|Q_\ep|}\int_{\Omega_\ep}\EE(\rho,\theta|\tilde \rho,\tilde \theta)dx\right)^{\frac 34}|Q_\ep|^{\frac 34}.
\end{multline*}
From  \eqref{totaldiss} and \eqref{ititiallimit},\eqref{initlim2} we deduce that
$$
\left(\frac 1{|Q_\ep|} \int_{\Omega_\ep} \EE(\rho,\theta|\tilde \rho,\tilde \theta) dx\right)\in L^\infty(0,T),
$$
with a bound independent of $\ep$ (cf. Equation (2.52) in \cite{Fei2}).  Thus

$$\mathcal I_1 \leq \|\tilde u - (u^3)_{Q_\ep}\|_{L^\infty(\Omega_\ep)} \left(\frac 1{|Q_\ep|}\int_{\Omega_\ep} \EE(\rho,\theta|\tilde \rho,\tilde \theta)dx\right)^{\frac 12} |Q_\ep|.$$
As far as $W^{1,1}(0,1)\hookrightarrow L^\infty(0,1)$ we have 

\begin{multline*}
\|\tilde u - (u^3)_{Q_\ep}\|_{L^\infty(\Omega_\ep)} = \|\tilde u - (u^3)_{Q_\ep}\|_{L^\infty(0,1)} \leq  \|\partial_y (\tilde u - (u^3)_{Q_\ep})\|_{L^1(0,1)}\\ \leq \frac 1{|Q_\ep|} \|\partial_y (\tilde u - u^3)\|_{L^1(\Omega_\ep)} \leq \frac1{|Q_\ep|^{\frac 12}} \|\partial_y (\tilde u - u^3)\|_{L^2(\Omega_\ep)}.
\end{multline*}
This implies

\begin{multline*}
\mathcal I_1 \leq \|\partial_y(\tilde u - u^3)\|_{L^2(\Omega_\ep)} \left(\frac 1{|Q_\ep|} \int_{\Omega_\ep} \EE(\rho,\theta|\tilde \rho,\tilde \theta)dx\right)^{\frac 12} |Q_\ep|^{\frac 12} \\ \leq \delta \|\partial_y (\tilde u - u^3)\|_{L^2(\Omega_\ep)}^2 + K(\delta)\int_{\Omega_\ep} \EE(\rho,\theta|\tilde \rho, \tilde \theta)dx.
\end{multline*}

Summarizing the calculations above we get

$$
\|[\rho(s(\rho,\theta) - s(\tilde \rho,\tilde \theta))]_{res}\|_{L^{\frac 43}(\Omega_\ep)} \leq \left(\frac1{|Q_\ep|}\int_{\Omega_\ep} \EE(\rho,\theta|\tilde \rho,\tilde \theta)dx\right)^{\frac 12} |Q_\ep|^{\frac 34},
$$
and thus

\begin{multline*}
\mathcal I_2 \leq \|(u^3)_{Q_\ep} - u^3\|_{L^4(\Omega_\varepsilon)} \left(\int_{\Omega_\ep}\EE(\rho,\theta|\tilde \rho,\tilde \theta)dx\right)^{\frac 12}|Q_\ep|^{\frac 14}\\ \leq c {|Q_\ep|^{\frac 14}}\|\nabla \u\|_{L^2(\Omega_\ep)} \left(\int_{\Omega_\ep} \EE(\rho,\theta|\tilde \rho,\tilde \theta)dx\right)^{\frac 12}\\ \leq  \ep\|\nabla \u\|_{L^2(\Omega_\ep)}^2 + c\int_{\Omega_\ep}\EE(\rho,\theta|\tilde \rho,\tilde \theta)dx,
\end{multline*}
where we used the result of Lemma \ref{nonepoin}.

We conclude  that

\begin{multline}\label{step2finaleq}
\left|\int_{\Omega_\varepsilon} \rho \left(s(\rho, \theta) - s (\tilde \rho, \tilde \theta)\right) (\tilde u - u^3)\partial_y \tilde \theta dx\right|\\
\leq \delta \|\partial_y(\tilde u - u^3)\|_{L^2(\Omega_\ep)}^2 + \ep \|\nabla \u\|_{L^2(\Omega_\ep)}^2\\
 + K(\delta,\cdot) \int_{\Omega_\ep}\left( \frac 12 \rho|u^3 - \tilde u|^2 + \EE(\rho,\theta|\tilde \rho,\tilde\theta)\right)dx,
\end{multline}
for any $\delta >0$. Here and hereafter, $K(\delta, \cdot)$ is a generic constant depending on $\delta$, $\tilde \rho$, $\tilde u$, $\tilde \theta$, $\underline \rho$, $\underline \theta$ through its respective norms. It is independent of $\varepsilon$.

\bigskip
\noindent{\it Step 3} Using \eqref{continuityeq-1D}  and \eqref{momentumeq-1D}  we get  

$$%\int_{\Omega_\ep} \rho (\de_t \tilde \u + \tilde  \u \cdot \nabla_x \tilde \u)\cdot (\tilde \u-\u) dx = 
\int_{\Omega_\ep} \rho (\de_t \tilde u + \tilde u \de_y \tilde u) (\tilde u-u^3) dx$$

$$ =\int_{\Omega_\ep} \frac{\rho}{\tilde \rho} (\tilde u -  u^3)  \left(\de_y[(\nu_0 + \nu_1 \tilde \theta) \de_y \tilde u] - \de_y p (\tilde \rho, \tilde \theta)\right)dx $$

$$ =\int_{\Omega_\ep} \frac{1}{\tilde \rho} (\rho-\tilde \rho) (\tilde u -  u^3)  \left(\de_y[(\nu_0 + \nu_1 \tilde \theta) \de_y \tilde u] - \de_y p (\tilde \rho, \tilde \theta)\right)dx $$

\begin{equation} +\int_{\Omega_\ep}  (\tilde u -  u^3)  \left(\de_y[(\nu_0 + \nu_1 \tilde \theta) \de_y \tilde u] - \de_y p (\tilde \rho, \tilde \theta)\right)dx. \label{step3ineq}\end{equation}
Due to the regularity of $(\tilde u,\tilde \rho, \tilde\theta)$ it follows that

\begin{multline*}
\int_{\Omega_\ep} \frac{1}{\tilde \rho} (\rho-\tilde \rho) (\tilde u -  u^3)  \left(\de_y[(\nu_0 + \nu_1 \tilde \theta) \de_y \tilde u] - \de_y p (\tilde \rho, \tilde \theta)\right)dx \\
\leq c \int_{\Omega_\ep}|(\rho - \tilde \rho)(\tilde u - u^3)|dx.
\end{multline*}
To estimate $\int_{\Omega_\ep}|(\rho - \tilde \rho)(\tilde u - u^3)|dx$ we introduce the following proposition. 

\begin{proposition}
There exists $c>0$ independent of $\rho$ and $\theta$ such that 

\begin{equation}\label{rustE}
\begin{array}{ll}
\mathcal E(\rho,\theta|\tilde\rho, \tilde \theta)  \geq c|\rho - \tilde \rho|^2 &\quad \mbox{ for } \rho\in[\underline \rho, \overline \rho],\\
\mathcal E(\rho,\theta|\tilde \rho, \tilde \theta) \geq c|\rho - \tilde \rho| &\quad \mbox{ for } \rho<\underline \rho,\\
\mathcal E(\rho,\theta|\tilde \rho, \tilde \theta) \geq c\rho &\quad \mbox{ for }\rho>\overline \rho.
\end{array}
\end{equation}
\end{proposition}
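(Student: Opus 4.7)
The plan is to do a straightforward case analysis using the pointwise lower bound on the relative entropy that has already been recorded as \eqref{entfuncest}. Recall that this bound splits according to whether $(\rho,\theta)$ lies in the essential set $[\underline\rho,\overline\rho]\times[\underline\theta,\overline\theta]$ or not, and that in the residual regime the lower bound is $c(1+|\rho s(\rho,\theta)|+\rho e(\rho,\theta))$. The three inequalities in \eqref{rustE} are then recovered by combining this with (i) the growth $\rho e(\rho,\theta)\ge c(\rho^{5/3}+\theta^4)$ derived in Section \ref{precal}, and (ii) the fact that $\tilde\rho$ is bounded between $2\underline\rho$ and $\overline\rho/2$ by construction.

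For the first inequality (case $\rho\in[\underline\rho,\overline\rho]$) I would split on $\theta$. If $\theta\in[\underline\theta,\overline\theta]$ the conclusion is immediate from the essential branch of \eqref{entfuncest}. If $\theta$ lies outside this interval, then $(\rho,\theta)$ is in the residual regime, so \eqref{entfuncest} gives $\mathcal{E}\ge c>0$. Since $|\rho-\tilde\rho|^2\le\overline\rho^2$ on $[\underline\rho,\overline\rho]$, the bound $\mathcal{E}\ge c|\rho-\tilde\rho|^2$ follows after adjusting the constant.

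For the second inequality ($\rho<\underline\rho$) the point $(\rho,\theta)$ is necessarily in the residual set, so \eqref{entfuncest} yields $\mathcal{E}\ge c$; because $|\rho-\tilde\rho|\le\tilde\rho\le\overline\rho/2$ is uniformly bounded, the estimate $\mathcal{E}\ge c|\rho-\tilde\rho|$ is again obtained by shrinking the constant. For the third inequality ($\rho>\overline\rho$) I would use the residual branch $\mathcal{E}\ge c\,\rho e(\rho,\theta)$ together with $\rho e(\rho,\theta)\ge c\rho^{5/3}$; since $\rho>\overline\rho$, we get $\rho^{5/3}\ge\overline\rho^{2/3}\rho$, giving $\mathcal{E}\ge c'\rho$.

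There is no real obstacle here: the proposition is essentially a repackaging of \eqref{entfuncest} tailored to bounding $\int_{\Omega_\ep}|(\rho-\tilde\rho)(\tilde u-u^3)|\,dx$ in the subsequent Step 3, where the three regimes correspond to the natural way one would split this integral (quadratic control in the essential regime, linear control for small $\rho$, and direct control by $\rho$ for large $\rho$, which will later be combined with integrability of $u^3$ via the energy estimate). The only mild care needed is to track that the constants in \eqref{entfuncest} and in the growth of $\rho e$ are independent of $\ep$, which they are by construction of $\underline\rho,\overline\rho,\underline\theta,\overline\theta$ in terms of $\tilde\rho,\tilde\theta$.
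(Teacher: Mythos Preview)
Your argument is correct. All three cases go through exactly as you describe: the essential branch of \eqref{entfuncest} handles $\rho\in[\underline\rho,\overline\rho]$ with $\theta\in[\underline\theta,\overline\theta]$, and in every other situation the residual branch already gives a uniform positive lower bound (or the growth $\rho e(\rho,\theta)\ge c\rho^{5/3}$), which is more than enough since $|\rho-\tilde\rho|$ is bounded on $[0,\overline\rho]$.

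The paper proceeds differently. Rather than invoking \eqref{entfuncest}, it goes back to the decomposition $\mathcal{E}(\rho,\theta|\tilde\rho,\tilde\theta)=\mathcal{F}(\rho)+\mathcal{G}(\rho,\theta)$ with $\mathcal{G}\ge 0$, where $\mathcal{F}(\rho)=H^{\tilde\theta}(\rho,\tilde\theta)-\partial_\rho H^{\tilde\theta}(\tilde\rho,\tilde\theta)(\rho-\tilde\rho)-H^{\tilde\theta}(\tilde\rho,\tilde\theta)$ is strictly convex with its zero minimum at $\rho=\tilde\rho$. The three inequalities are then read off from the behaviour of $\mathcal{F}$ alone: strict convexity on the compact range gives the quadratic bound; $\partial_\rho\mathcal{F}\le -c<0$ on $(0,\tfrac32\underline\rho)$ gives the linear bound for small $\rho$; and $\partial_\rho\mathcal{F}\ge c>0$ beyond $\tfrac23\overline\rho$ gives the linear growth for large $\rho$. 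This route is a little more self-contained (it does not rely on the black-box \eqref{entfuncest} nor on the growth of $\rho e$) and makes explicit that the bounds come purely from the $\rho$-part of the relative entropy, which is conceptually clean. Your route is shorter because it reuses machinery already set up in Section \ref{precal}; either approach is perfectly adequate here.
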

\begin{proof}
We use the notation from the proof of Lemma 5.1 in \cite{Fei2}, i.e., 

$$\mathcal F(\rho) = H^{\tilde \theta}(\rho, {\tilde \theta}) - \partial_\rho H^{\tilde \theta}(\tilde \rho, {\tilde \theta})(\rho - \tilde \rho) - H^{\tilde \theta}(\tilde \rho, {\tilde \theta}),$$
and
$$\mathcal G(\rho, \theta) = H^{\tilde \theta}(\rho,\theta) -H^{\tilde \theta}( \rho, {\tilde \theta}).$$

According to Section 2.2.3 in \cite{Fei2} it holds

$$\mathcal E(\rho,\theta|\tilde \rho, \tilde \theta) = \mathcal F(\rho) + \mathcal G(\rho,\theta)\geq \mathcal F(\rho) + \mathcal G(\rho,\tilde \theta)\geq \mathcal F(\rho),$$
and the function $\mathcal F(\rho)$ is strictly convex attaining its minimum $0$ when $\rho =\tilde \rho$. This immediately implies the first inequality in \eqref{rustE}.

As far as $\partial_\rho \mathcal F(r)\leq -c <0$ for $r\in \left(0,\frac32\underline \rho\right)$, we have

$$
\mathcal F(\rho)  = \mathcal F(\rho) - \mathcal F\left(\frac32\underline \rho\right) + \mathcal F\left(\frac32\underline \rho\right) - \mathcal F(\tilde \rho).
$$
Since $\mathcal F(\rho) - \mathcal F\left(\frac32\underline \rho\right)$ as well as $\mathcal F\left(\frac32\underline \rho\right) - \mathcal F(\tilde \rho)$ are positive, we may proceed as follows

$$\mathcal F(\rho) = \left|\mathcal F(\rho) - \mathcal F\left(\frac32\underline \rho\right)\right| + \left|\mathcal F\left(\frac32\underline \rho\right) - \mathcal F(\tilde \rho)\right| \geq c\left|\rho - \frac32\underline\rho\right|\geq |\rho - \tilde \rho|.$$

Similarly, {$\partial_\rho\mathcal F(r)\geq c>0$ for $r\in (\frac 23\overline \rho,\infty)$} and we deduce 

\begin{multline*}
\mathcal F(\rho) = \mathcal F(\rho)-\mathcal F(\tilde \rho) = \mathcal F(\rho)-\mathcal F\left(\frac 23 \overline \rho\right) + \mathcal F\left(\frac 23 \overline \rho\right) - \mathcal F(\tilde \rho)\\ \geq {c\left(\rho - \frac 23 \overline \rho\right) + c\left(\frac 23 \overline \rho - \tilde \rho\right)^2 \geq \frac{c}3\rho}.
\end{multline*}
\end{proof}

Now we estimate $\displaystyle \int_{\Omega_\ep}|(\rho - \tilde \rho)(\tilde u - u^3)|dx$. First, we split it into three parts:

\begin{multline*}\int_{\Omega_\varepsilon} |(\rho - \tilde \rho)(\tilde u - u^3)|dx = \int_{\{\underline \rho \leq \rho\leq \overline \rho\}} |\left(\rho - \tilde\rho\right) (\tilde u - u^3)|dx\\
+ \int_{\{\rho<\underline \rho\}}|\left(\rho - \tilde \rho\right)(\tilde u - u^3)|dx
+\int_{\{\rho>\overline \rho\}} |\left(\rho - \tilde\rho\right) (\tilde u - u^3)|dx. 
\end{multline*}
Second,  using \eqref{rustE} we estimate each term as follows: 

\begin{multline*}
\int_{\{\underline \rho\leq \rho\leq\overline \rho\}}|\left(\rho - \tilde \rho\right)(\tilde u - u^3)|dx\leq  \delta \|\tilde u - u^3\|_{L^2(\Omega_\varepsilon)}^2 + K(\delta, \cdot) \int_{\Omega_\varepsilon}\mathcal E(\rho,\theta|\tilde \rho, \tilde \theta)dx,
\end{multline*}

\begin{multline*}
\int_{\{\rho<\underline \rho\}}|\left(\rho - \tilde \rho\right)(\tilde u - u^3)|dx\leq \delta\int_{\Omega_\varepsilon} |\tilde u - u^3|^2dx + K(\delta,\cdot) \int_{\{\rho<\underline \rho\}}\left|\rho - \tilde \rho\right|^2dx \\ \leq \delta \|\tilde u - u^3\|_{L^2(\Omega_\varepsilon)}^2 + K(\delta,\cdot) \int_{\{\rho<\underline \rho\}}\left|\rho - \tilde \rho\right|dx \\ \leq \delta \|\tilde u - u^3\|_{L^2(\Omega_\varepsilon)}^2 + K(\delta,\cdot) \int_{\Omega_\varepsilon}\mathcal E(\rho,\theta|\tilde \rho, \tilde \theta)dx,
\end{multline*}
 for any $\delta >0$ and

\begin{multline*}
\int_{\{\rho>\overline \rho\}} \frac {|\rho - \tilde \rho|}{\sqrt \rho} \sqrt \rho |\tilde u - u^3|dx\leq \int_{\Omega_\varepsilon} \rho |\tilde u - u^3|^2dx + \int_{\{\rho>\overline \rho\}} \frac{|\rho - \tilde \rho|^2}{\rho}dx \\\leq \int_{\Omega_\varepsilon} \rho| \tilde u - u^3|^2 + K(\cdot)\int_{\{\rho>\overline \rho\}} \rho dx  \leq \int_{\Omega_\varepsilon} \rho|\tilde u - u^3|^2dx + K(\cdot)\int_{\Omega_\varepsilon}\mathcal E(\rho, \theta|\tilde \rho, \tilde \theta)dx.
\end{multline*}

The second integral on the right-hand side of \eqref{step3ineq}  is handled by integration by parts as 

\begin{multline*}\int_{\Omega_\ep}  (\tilde u -  u^3)  \left(\de_y[(\nu_0 + \nu_1 \tilde \theta) \de_y \tilde u] - \de_y p (\tilde \rho, \tilde \theta)\right)dx \\ = \int_{\Omega_\ep} (\nu_0 + \nu_1 \tilde \theta) \de_y \tilde u \de_y (u^3 - \tilde u)    +  p (\tilde \rho, \tilde \theta)   \de_y(\tilde u -  u^3)     dx .\end{multline*}

Thus we arrive at

$$\int_{\Omega_\ep} \rho (\de_t \tilde u + \tilde  u \partial_y \tilde u)(\tilde u- u^3) dx \leq \int_{\Omega_\ep} (\nu_0 + \nu_1 \tilde \theta) \de_y \tilde u \de_y(u^3 - \tilde u)    +  p (\tilde \rho, \tilde \theta)   \de_y(\tilde u -  u^3)     dx$$

\eqn{step3finaleq}
$$ + { \delta  }{\|u^3 - \tilde u \|_{L^2(\Omega_\ep)}^2} + 
K(\delta,\cdot)  \int_{\Omega_\ep} \left( \frac 12 \rho|\tilde u -u^3|^2+ \EE(\rho, \theta|\tilde \rho , \tilde \theta) \right)dx,$$
for any $\delta > 0$.

\bigskip
\noindent{\it Step 4} We calculate
$$ \int_{\Omega_\ep} \rho\left(s (\rho,  \theta) - s (\tilde \rho,\tilde \theta)\right)(\de_t \tilde \theta + \tilde u \partial_y\tilde \theta)dx $$

$$ =  \int_{\Omega_\ep} \rho\left[s (\rho,  \theta) - s (\tilde \rho,\tilde \theta)\right]_{ess}(\de_t \tilde \theta + \tilde u \de_y\tilde \theta) dx 
 + \int_{\Omega_\ep} \rho\left[s (\rho,  \theta) - s (\tilde \rho,\tilde \theta)\right]_{res}(\de_t \tilde \theta + \tilde u \de_y\tilde \theta) dx.$$
By a combination of \eqref{entfuncest} and \eqref{resestrho}  we get

$$\left| \int_{\Omega_\ep} \rho\left[s (\rho,  \theta) - s (\tilde \rho,\tilde \theta)\right]_{res}(\de_t \tilde \theta + \tilde u \de_y\tilde \theta) dx\right| \leq K(\cdot)\int_{\Omega_\ep} \EE (\rho, \theta|\tilde \rho, \tilde \theta)dx,$$
while

$$\int_{\Omega_\ep} \rho\left[s (\rho,  \theta) - s (\tilde \rho,\tilde \theta)\right]_{ess}(\de_t \tilde \theta + \tilde u \de_y\tilde \theta) dx = \int_{\Omega_\ep} (\rho-\tilde \rho)\left[s (\rho,  \theta) - s (\tilde \rho,\tilde \theta)\right]_{ess}(\de_t \tilde \theta + \tilde u \de_y\tilde \theta) dx $$

$$ + \int_{\Omega_\ep} \tilde \rho\left[s (\rho,  \theta) - s (\tilde \rho,\tilde \theta)\right]_{ess}(\de_t \tilde \theta + \tilde u \de_y\tilde \theta) dx ,$$
where with the help of the Taylor-Lagrange formula and \eqref{essest},

$$\left|\int_{\Omega_\ep} (\rho-\tilde \rho)\left[s (\rho,  \theta) - s (\tilde \rho,\tilde \theta)\right]_{ess}(\de_t \tilde \theta + \tilde u \de_y\tilde \theta) dx \right| $$

$$\leq \left(  \underset{(\rho, \theta) \in [\underline \rho, \overline \rho]\times [\underline \theta, \overline \theta]}{\sup} |\de _\rho s( \rho, \theta)| + \underset{(\rho, \theta) \in [\underline \rho, \overline \rho]\times [\underline \theta, \overline \theta]}{\sup} |\de _\theta s (\rho, \theta)| \right) \|\de_t \tilde \theta + \tilde u \de_y\tilde \theta\|_{L^\infty(0,1)}$$

\eqn{step4eq2} $$\times \int_{\Omega_\ep} |[\rho -\tilde \rho]_{ess}|(|[\rho - \tilde \rho]_{ess}| + |[\theta - \tilde \theta]_{ess}|)dx \leq K(\cdot)\int_{\Omega_\ep} \EE (\rho, \theta|\tilde \rho, \tilde \theta)dx .$$
Finally, we write 

$$\int_{\Omega_\ep} \tilde \rho\left[s (\rho,  \theta) - s (\tilde \rho,\tilde \theta)\right]_{ess}(\de_t \tilde \theta + \tilde u \de_y\tilde \theta) dx $$

$$ = 
\int_{\Omega_\ep}\tilde \rho\left[s (\rho,  \theta) - \de_\rho s (\tilde \rho, \tilde \theta)(\rho - \tilde \rho) - \de_\theta s (\tilde \rho, \tilde \theta) (\theta - \tilde \theta) - s (\tilde \rho,\tilde \theta)\right]_{ess}(\de_t \tilde \theta + \tilde u \de_y\tilde \theta) dx$$

$$ - \int_{\Omega_\ep} \tilde \rho\left[ \de_\rho s (\tilde \rho, \tilde \theta)(\rho - \tilde \rho) + \de_\theta s (\tilde \rho, \tilde \theta) (\theta - \tilde \theta) \right]_{res}(\de_t \tilde \theta + \tilde u \de_y\tilde \theta) dx$$

$$ + \int_{\Omega_\ep} \tilde \rho\left( \de_\rho s (\tilde \rho, \tilde \theta)(\rho - \tilde \rho) + \de_\theta s (\tilde \rho, \tilde \theta) (\theta - \tilde \theta) \right)(\de_t \tilde \theta + \tilde u \de_y\tilde \theta) dx,$$
where the first integral on the right-hand side can be estimated using the Taylor-Lagrange formula of the second order and \eqref{essest} (similarly to \eqref{step4eq2}) and the second integral on the right-hand side can be estimated by  \eqref{resestheta} and \eqref{resestrho}.  

Thus we conclude that

$$ -  \int_{\Omega_\ep}\rho\left(s (\rho,  \theta) - s (\tilde \rho,\tilde \theta)\right)(\de_t \tilde \theta + \tilde u \partial_y\tilde \theta) dx \leq K(\cdot)\int_{\Omega_\varepsilon} \EE(\rho, \theta|\tilde \rho, \tilde \theta) dx$$

\eqn{step4finaleq}
$$ - \int_{\Omega_\ep} \tilde \rho\left( \de_\rho s (\tilde \rho, \tilde \theta)(\rho - \tilde \rho) + \de_\theta s (\tilde \rho, \tilde \theta) (\theta - \tilde \theta) \right)(\de_t \tilde \theta + \tilde u  \de_y\tilde \theta) dx.$$

\bigskip
\noindent{\it Step 5} By the integration by parts
$$\int_{\Omega_\ep} \left(\left(1-\frac{\rho}{\tilde \rho}\right)\de_t p(\tilde \rho, \tilde \theta) - \frac\rho{\tilde \rho} \u \cdot \nabla_x p (\tilde \rho, \tilde \theta)\right)dx
$$

$$= \int_{\Omega_\ep} \left(\left(1-\frac{\rho}{\tilde \rho}\right)\de_t p(\tilde \rho, \tilde \theta) - \frac\rho{\tilde \rho} u^3 \de_y p (\tilde \rho, \tilde \theta)\right)dx$$

$$=  \int_{\Omega_\ep} (\tilde \rho - \rho) \frac1{\tilde \rho}\left(\de_t p(\tilde \rho, \tilde \theta) + \tilde  u \de_y p (\tilde \rho, \tilde \theta)\right)dx + \int_{\Omega_\ep} p (\tilde \rho, \tilde \theta) \de_y u^3 dx $$
$$+ \int_{\Omega_\ep} (\tilde \rho - \rho)\frac1{\tilde \rho} \de_y p (\tilde \rho, \tilde \theta) (u^3 - \tilde u) dx,$$
where, by means of the same arguments as in Step 3, 

\begin{multline*}\left| \int_{\Omega_\ep} (\tilde \rho - \rho)\frac1{\tilde \rho} \de_y p (\tilde \rho, \tilde \theta) (u^3 - \tilde u) dx\right| \\ \leq 
 { \delta }{\|u^3 - \tilde u \|_{L^2(\Omega_\ep)}^2} + K(\delta,\cdot) \int_{\Omega_\ep}\left( \frac12\rho |u^3 -\tilde u|^2  + \EE(\rho, \theta|\tilde \rho , \tilde \theta) \right)dx,\end{multline*}
for any $\delta > 0$. Using this estimate we get

\begin{multline}\label{step6finaleq}
\int_{\Omega_\ep} \left(\left(1-\frac{\rho}{\tilde \rho}\right)\de_t p(\tilde \rho, \tilde \theta) - \frac\rho{\tilde \rho} \u \cdot \nabla_x p (\tilde \rho, \tilde \theta)\right)dx \\ \leq \int_{\Omega_\ep} (\tilde \rho - \rho) \frac1{\tilde \rho}\left(\de_t p(\tilde \rho, \tilde \theta) + \tilde  u \de_y p (\tilde \rho, \tilde \theta)\right)dx 
+ \int_{\Omega_\ep} p (\tilde \rho, \tilde \theta) \de_y u^3 dx \\ + 
  { \delta }{\|u^3 - \tilde u \|_{L^2(\Omega_\ep)}^2} + K(\delta,\cdot) \int_{\Omega_\ep}\left( \frac12\rho |u^3 - \tilde u|^2+ \EE(\rho, \theta|\tilde \rho , \tilde \theta) \right)dx,\end{multline}
for any $\delta > 0$.

\bigskip
\noindent {\it Step 6} 
Summing up the estimates \eqref{step0finaleq}, \eqref{step1finaleq}, \eqref{step2finaleq}, \eqref{step3finaleq}, \eqref{step4finaleq} and \eqref{step6finaleq} we can rewrite the relative entropy inequality \eqref{relentineq2} in the form 

$$ \frac1{|Q_\ep|}\int_{\Omega_\ep} \left(\frac12 \rho | \u - \tilde \u|^2 + \EE(\rho, \theta|\tilde \rho,\tilde\theta)\right)(\tau , \cdot )dx $$

$$+\frac1{|Q_\ep|} \int_0^\tau  \int_{\Omega\ep} \left(\frac{\tilde \theta}{\theta} \S(\theta, \nabla_x \u):\nabla_x \u -  (\nu_0 + \nu_1\tilde \theta) \de_y \tilde u \de_y(u^3 - \tilde u) - \S (\theta, \nabla_x \u):\nabla_x \tilde \u \right)dx dt $$

$$
+\frac1{|Q_\ep|} \int_0^\tau  \int_{\Omega\ep} \left( \frac{\q (\theta, \nabla_x \theta)}{\theta} \cdot \nabla_x \tilde \theta  - \frac{\tilde \theta}{\theta} \frac{\q(\theta, \nabla_x \theta)\cdot \nabla_x \theta}{\theta}\right)dx dt  \leq  {\Gamma(\ep)}$$

$$+ \frac1{|Q_\ep|} \int_0^\tau  \left[  { \delta }{\|u^3 - \tilde u \|_{L^2(\Omega_\ep)}^2} +  K(\delta, \cdot)  \int_{\Omega_\ep}\left( \frac12\rho |u^3 - \tilde u|^2+ \EE(\rho, \theta|\tilde \rho , \tilde \theta) \right)dx\right]dt$$

$$
 + \frac1{|Q_\ep|} \int_0^\tau \int_{\Omega_\ep}\left( \delta \|\partial_y(u^3 - \tilde u)\|_{L^2(\Omega_\ep)}^2 + \ep \|\nabla \u\|_{L^2(\Omega_\ep)}^2\right)dx dt
$$

$$ +\frac1{|Q_\ep|}\int_0^\tau  \int_{\Omega_\ep} \left( p (\tilde \rho, \tilde \theta) - p (\rho, \theta)\right) \de_y\tilde u dx dt$$

$$+ \frac1{|Q_\ep|}\int_0^\tau   \int_{\Omega_\ep} (\tilde \rho - \rho) \frac1{\tilde \rho}\left(\de_t p(\tilde \rho, \tilde \theta) + \tilde  u \de_y p (\tilde \rho, \tilde \theta)\right)dx dt  $$

\eqn{relentineq3} 
$$-\frac1{|Q_\ep|}\int_0^\tau  \int_{\Omega_\ep} \tilde \rho\left( \de_\rho s (\tilde \rho, \tilde \theta)(\rho - \tilde \rho) + \de_\theta s (\tilde \rho, \tilde \theta) (\theta - \tilde \theta) \right)( \de_t \tilde \theta + \tilde u  \de_y \tilde \theta )   dxdt,$$
for almost all $\tau \in [0,T]$ and any $\delta >0$. Recall that {$\Gamma(\ep) \to 0$ as $\ep \to 0$}.

\bigskip
\noindent {\it Step 7}
Our next goal is to control the last three integrals on the right-hand side of \eqref{relentineq3}.  To this end, we recall a useful identity that follows directly from the Gibbs' equation \eqref{gibbs}:

{\eqn{gibbsident2} $$ r \de _\rho s(r,\Theta) = - \frac1r \de_\theta p(r,\Theta).$$}

Using \eqref{gibbsident2} we obtain
 $$\int_{\Omega_\ep} (\tilde \rho - \rho) \frac1{\tilde \rho}\left(\de_t p(\tilde \rho, \tilde \theta) + \tilde  u \de_y p (\tilde \rho, \tilde \theta)\right)dx
$$

$$-\int_{\Omega_\ep} \tilde \rho\left( \de_\rho s (\tilde \rho, \tilde \theta)(\rho - \tilde \rho) + \de_\theta s (\tilde \rho, \tilde \theta) (\theta - \tilde \theta) \right)( \de_t \tilde \theta + \tilde u  \de_y \tilde \theta )   dx$$

 $$= \int_{\Omega_\ep} \tilde \rho  (\tilde \theta - \theta)  \de_\theta s (\tilde \rho, \tilde \theta) ( \de_t \tilde \theta + \tilde u  \de_y \tilde \theta )   dx
+ \int_{\Omega_\ep}  (\tilde \rho - \rho)  \frac1{\tilde \rho}  \de_\rho p (\tilde \rho, \tilde \theta) ( \de_t \tilde \rho + \tilde u  \de_y \tilde \rho )   dx.$$
Since $\tilde \rho$ and  $\tilde u$ satisfy the equation of continuity \eqref{continuityeq-1D} we get 

 $$\int_{\Omega_\ep}  (\tilde \rho - \rho)  \frac1{\tilde \rho}  \de_\rho p (\tilde \rho, \tilde \theta) ( \de_t \tilde \rho + \tilde u  \de_y \tilde \rho )   dx = - \int_{\Omega_\ep}  (\tilde \rho - \rho)    \de_\rho p (\tilde \rho, \tilde \theta) \de_y \tilde u     dx.$$
By using \eqref{gibbsident2} and \eqref{continuityeq-1D} once more,  followed by the use of the entropy equation \eqref{entropyeq-1D}  we deduce that 

$$\int_{\Omega_\ep} \tilde \rho  (\tilde \theta - \theta)  \de_\theta s (\tilde \rho, \tilde \theta) ( \de_t \tilde \theta + \tilde u  \de_y \tilde \theta )   dx$$

$$ = \int_{\Omega_\ep}   \tilde \rho(\tilde \theta - \theta) \left[ \de_t s(\tilde \rho, \tilde \theta) + \tilde u \de_y s(\tilde \rho, \tilde \theta)\right]  dx - \int_{\Omega_\ep}   (\tilde \theta - \theta)  \de_\theta p (\tilde \rho, \tilde \theta) \de_y \tilde u   dx$$

$$ = \int_{\Omega_\ep}   (\tilde \theta - \theta)\left[   \frac1{\tilde \theta} \left((\nu_0 + \nu_1 \tilde \theta) (\de_y \tilde u)^2 - \frac{q(\tilde \theta, \de_y \tilde \theta)\de_y \tilde \theta}{{\tilde \theta}} \right)  -\de_y\left( \frac{q(\tilde \theta,\de_y \tilde \theta)}{\tilde \theta}\right)  \right]  dx $$

 $$- \int_{\Omega_\ep}   (\tilde \theta - \theta)  \de_\theta p (\tilde \rho, \tilde \theta) \de_y \tilde u   dx.$$

We  use the Taylor-Lagrange formula together with \eqref{essest} on the essential part;  \eqref{entfuncest} with $p(\rho, \theta) \leq c\rho e(\rho, \theta)$ and  \eqref{resestheta}, \eqref{resestrho} on the residual part to obtain 
$$\left| \int_{\Omega_\ep} \left( p (\tilde \rho, \tilde \theta) -      \de_\rho p (\tilde \rho, \tilde \theta) (\tilde \rho - \rho)    -    \de_\theta p (\tilde \rho, \tilde \theta) (\tilde \theta - \theta) - p (\rho, \theta) \right) \de_y \tilde u   dx\right|$$

$$\leq K(\cdot) \int_{\Omega_\varepsilon} \EE(\rho, \theta|\tilde \rho, \tilde \theta) dx.$$

Finally, the integration by parts with \eqref{bc1D} allows us to rewrite \eqref{relentineq3}  in the form

$$ \frac1{|Q_\ep|}\int_{\Omega_\ep} \left(\frac12 \rho | \u - \tilde \u|^2 + \EE(\rho, \theta|\tilde \rho,\tilde\theta)\right)(\tau , \cdot )dx $$

$$+\frac1{|Q_\ep|} \int_0^\tau  \int_{\Omega\ep} \left(\frac{\tilde \theta}{\theta} \S(\theta, \nabla_x \u):\nabla_x \u -  (\nu_0 +\nu_1\tilde \theta) \de_y \tilde u \de_y(u^3 - \tilde u) - \S (\theta, \nabla_x \u):\nabla_x \tilde \u \right.$$

$$\left. - \frac{\tilde \theta - \theta}{\tilde \theta} (\nu_0 +\nu_1\tilde \theta)(\de_y \tilde u)^2\right)dx dt $$

$$
+\frac1{|Q_\ep|} \int_0^\tau  \int_{\Omega\ep} \left( \frac{\q (\theta, \nabla_x \theta)\cdot \nabla_x \tilde \theta}{\theta}   - \frac{\tilde \theta}{\theta} \frac{\q(\theta, \nabla_x \theta)\cdot \nabla_x \theta}{\theta}\right.$$

$$\left.+ (\tilde \theta - \theta)    \frac{q(\tilde \theta, \de_y \tilde \theta)\de_y \tilde \theta}{{\tilde \theta}^2}+ \de_y  (\theta - \tilde \theta )    \frac{q(\tilde \theta, \de_y \tilde \theta)}{\tilde \theta}  \right)dx dt $$

  $$\leq  {\Gamma(\ep)} + \frac1{|Q_\ep|} \int_0^\tau  \left[  { \delta }{\|u^3 - \tilde u \|_{L^2(\Omega_\ep)}^2}   +  K(\delta, \cdot) \int_{\Omega_\ep}\left( \frac12\rho |u^3 - \tilde u|^2+ \EE(\rho, \theta|\tilde \rho , \tilde \theta) \right)dx\right] dt$$

\eqn{relentineq4}	$$
 + \frac1{|Q_\ep|} \int_0^\tau \int_{\Omega_\ep}\left( \delta \|\partial_y(u^3 - \tilde u)\|_{L^2(\Omega_\ep)}^2 + \ep \|\nabla \u\|_{L^2(\Omega_\ep)}^2 \right)dx dt,
$$
for almost all $\tau \in [0,T]$ and any $\delta >0$.

\section{Dissipative terms}

The goal of this section is to show that the "dissipative" terms on the left-hand side of \eqref{relentineq4}   containing $\nabla_x \u$ are strong enough
to control   the velocity terms on the right-hand side.

\subsection{Viscosity}

In accordance with hypotheses \eqref{const-stress} and  \eqref{stress-cond} we have 

$$\mathbb{S} (\theta, \nabla_x \ve) = \mathbb{S}^0( \nabla_x \ve) +  \theta \mathbb{S}^1 (\nabla_x \ve),$$ with  

$$\mathbb{S}^i (\nabla_x \ve) = \mu_i (\nabla_x \ve + \nabla_x^T  \ve - \frac23 \div_x \ve \mathbb{I}) + \eta_i \div_x \ve \mathbb{I}, \ \ \ i =0,1.$$

 Using Lemma \ref{kornlike} we immediately obtain
\begin{equation}\label{c:intodhad}
\int_{\Omega_\ep} \S^i(\nabla_x\ve) : \nabla_x\ve \dx  \geq \int_{\Omega_\ep} \mu_i \abs{\nabla_x \ve}^2 + \eta_i\abs{\div_x\ve}^2 \dx,
\end{equation}
for $i=0,1$.

\subsection{Viscosity terms}

The terms to be dealt with are

\begin{multline*}\int_{\Omega\ep} \left(\frac{\tilde \theta}{\theta} \S(\theta, \nabla_x \u):\nabla_x \u -  (\nu_0 + \nu_1\tilde \theta) \de_y \tilde u \de_y(u^3 - \tilde u)\right.\\ - \left.\S (\theta, \nabla_x \u):\nabla_x \tilde \u  - \frac{\tilde \theta - \theta}{\tilde \theta} (\nu_0 + \nu_1\tilde \theta) (\de_y \tilde u)^2\right)dx.
\end{multline*}

First we deal with the "$\theta \S^1(\nabla_x \u)$" part: 

$$ \frac{\tilde \theta}{\theta} \theta \S^1(\nabla_x \u) :\nabla_x \u - \tilde \theta \S^1( \nabla_x \tilde \u) : (\nabla_x \u - \nabla_x \tilde \u) - \theta \S^1( \nabla_x \u):\nabla_x \tilde \u - \frac{ \tilde \theta - \theta}{\tilde \theta} \tilde \theta \S^1( \nabla_x \tilde \u ) : \nabla_x \tilde \u $$

$$= \tilde \theta \left( \S^1(\nabla_x \u)  - \S^1( \nabla_x \tilde \u)\right) : (\nabla_x \u - \nabla_x \tilde \u) +(\tilde \theta - \theta) \left( \S^1( \nabla_x \u) -  \S^1( \nabla_x \tilde \u )\right) : \nabla_x \tilde \u. $$
Using \eqref{c:intodhad} we obtain

$$\int_{\Omega_\varepsilon} \tilde \theta \left(\S^1(\nabla_x \u) - \S^1( \nabla_x \tilde \u) \right): (\nabla_x  \u -\nabla_x \tilde \u) dx \geq \mu_1 \underline{\theta} \int_{\Omega_\varepsilon} |\nabla_x(\u  -\tilde \u) |^2 dx . $$ 
Since 

$$ u^3(\cdot, y ) = \tilde u (\cdot, y) = 0 \mbox{ for } y=0,1,$$
we see  that

\begin{multline*}
\int_{\Omega_\varepsilon} \tilde \theta \left(\S^1(\nabla_x \u) - \S^1(\nabla_x \tilde \u) \right): (\nabla_x  \u -\nabla_x \tilde \u) dx \\ \geq c \left( \| u^3 -  \tilde u\|_{L^{2}(\Omega_\ep)}^2  + \|\nabla_x (\u - \tilde \u)\|^2_{L^2(\Omega_\ep)}\right).\end{multline*}

 Similarly using \eqref{esthetanon} we can show that

\begin{multline}
\left|\int_{\Omega_\varepsilon} (\tilde \theta - \theta) \left( \S^1( \nabla_x \u) - \S^1( \nabla_x \tilde \u)\right):\nabla_x \tilde \u dx\right| \\
\leq \delta \left(\|\de_y( u^3 - \tilde u)\|_{L^2(\Omega_\ep)}^2 {+\|\div_x (\u - \tilde \u)\|_{L^2(\Omega_\ep)}^2}\right) + K(\delta, \cdot)  \| \theta - \tilde \theta \|_{L^2(\Omega_\varepsilon)}^2,\end{multline}
for any $\delta> 0$.

Since 
\begin{equation}
\label{viscosityest7}
 \S^i( \nabla_x \tilde \u ) : \nabla_x \tilde \u = \nu_i (\de_y \tilde u)^2, \ i=0,1,
\end{equation}
we have that 

\begin{equation*}\frac{ \tilde \theta - \theta}{\tilde \theta} \tilde \theta \S^1( \nabla_x \tilde \u ) : \nabla_x \tilde \u = \frac{\tilde \theta - \theta}{\tilde \theta} \nu_1 \tilde \theta  (\de_y \tilde u)^2.\end{equation*}

Finally, we estimate the difference

$$\int_{\Omega_\ep} \left(\tilde \theta \S^1( \nabla_x \tilde \u) : (\nabla_x \u - \nabla_x \tilde \u) -   \nu_1 \tilde \theta  \de_y \tilde u \de_y(u^3 - \tilde u)\right)dx.$$
Proceeding with the integration by parts

$$\int_{\Omega\ep}  \tilde \theta\S^1( \nabla_x \tilde \u) : (\nabla_x \u - \nabla_x \tilde \u) dx = - \int_{\Omega\ep}\div_x [ \tilde \theta \S^1( \nabla_x \tilde \u)] \cdot ( \u - \tilde \u) dx$$

$$ + \int_{\partial \Omega_\ep}     \tilde \theta \S^1( \nabla_x \tilde \u)( \u - \tilde \u) \n dS.$$
Since 

\begin{multline*}- \int_{\Omega_\ep} \div_x [\tilde \theta  \S^1( \nabla_x \tilde \u)] \cdot ( \u - \tilde \u) dx = - \int_{\Omega_\ep} \de_y[ \tilde \theta \nu_1 \de_y \tilde u] (u^3 - \tilde u)dx \\=\int_{\Omega_\ep}  \tilde \theta \nu_1 \de_y \tilde u \de_y(u^3 - \tilde u)dx  ,\end{multline*}
is is only left to estimate the boundary integral. 
By the same arguments that lead to \eqref{viscosityboundary0} later, we see that

$$\int_{\partial \Omega_\ep}     \tilde \theta\S^1( \nabla_x \tilde \u)( \u - \tilde \u) \n dS= 0.$$

Thus we get

$$ \int_{\Omega_\varepsilon} \tilde \theta \S^1( \nabla_x \tilde \u) : (\nabla_x \u - \nabla_x \tilde \u)dx= \int_{\Omega_\varepsilon}    \nu_1\tilde \theta \de_y \tilde u \de_y(u^3 - \tilde u) dx .  $$

Now we treat the "$\S^0(\nabla_x \u)$" part: We first suppose that $\theta \geq \tilde \theta$ and calculate

$$ \frac{\tilde \theta}{\theta} \S^0( \nabla_x \u) :\nabla_x \u - \S^0( \nabla_x \tilde \u) : (\nabla_x \u - \nabla_x \tilde \u) - \S^0( \nabla_x \u):\nabla_x \tilde \u $$

$$- \frac{ \tilde \theta - \theta}{\tilde \theta} \S^0( \nabla_x \tilde \u ) : \nabla_x \tilde \u \geq \frac{\tilde \theta}\theta (\S^0(\nabla_x \u) - \S^0(\nabla_x \tilde \u)):\nabla_x ( \u - \tilde \u) $$

$$+ \tilde \theta\left( \frac1\theta - \frac1{\tilde \theta}\right) \S^0 (\nabla_x \tilde \u):\nabla_x ( \u - \tilde \u) + \frac{ \tilde \theta - \theta}\theta ( \S^0 ( \nabla_x \u) - \S^0(\nabla_x \tilde \u)) :\nabla_x \tilde \u.$$
Since the function $\theta \mapsto \frac1\theta$ is Lipschitz on the set $\theta \geq \tilde \theta$, we conclude that 

$$\int_{\{\theta \geq \tilde \theta\}} \left| \tilde \theta \left(\frac1\theta - \frac1{\tilde \theta}\right) \S^0 (\nabla_x \tilde \u) :\nabla_x (\u - \tilde \u)\right| dx $$

$$ \leq K(\cdot)\|  \partial_y \tilde u \|_{L^\infty(0,1)}   \frac{\|\tilde \theta\|_{L^\infty(0,1)}}{\underline \theta^2}  \int_{\Omega_\ep} |\theta - \tilde \theta| (|\partial_y ( u^3 - \tilde u)| + |\div_x (\u - \tilde \u)|) dx$$

\eqn{viscosityest9}
$$\leq \delta   \left(  \|\partial_y (u^3 - \tilde u)\|_{L^2(\Omega_\ep)}^2  + \|\div_x (\u - \tilde \u)\|_{L^2(\Omega_\ep)}^2\right)  + K(\delta, \cdot) \|\theta - \tilde \theta\|_{L^2(\Omega_\varepsilon)}^2,$$
for any $\delta > 0$. 
Analogously, we obtain a similar estimate  for the last term,

$$\int_{\{\theta \geq \tilde \theta\}}  \left|\frac{ \tilde \theta - \theta}\theta ( \S^0 ( \nabla_x \u) - \S^0(\nabla_x \tilde \u)) :\nabla_x \tilde \u\right|dx$$

$$\leq \delta  \left( \|\partial_y (u^3 - \tilde u)\|_{L^2(\Omega_\ep)}^2 {+ \|\div_x (\u - \tilde \u)\|_{L^2(\Omega_\ep)}^2}\right)
 + K(\delta, \cdot)  {\|\theta - \tilde \theta\|_{L^2(\Omega_\varepsilon)}^2},$$
for any $\delta > 0$. 

Next, if {$0 < \theta \leq \tilde \theta$}, we have

\begin{multline*} \frac{\tilde \theta}{\theta} \S^0( \nabla_x \u) :\nabla_x \u -  \S^0( \nabla_x \tilde \u) : (\nabla_x \u - \nabla_x \tilde \u) - \S^0( \nabla_x \u):\nabla_x \tilde \u 
- \frac{ \tilde \theta - \theta}{\tilde \theta} \S^0( \nabla_x \tilde \u ) : \nabla_x \tilde \u\\
\geq (\S^0(\nabla_x \u ) - \S^0 (\nabla_x \tilde \u)):\nabla_x (\u - \tilde \u)
 + \frac{\tilde \theta- \theta}{\tilde \theta}[\S^0 (\nabla_x \u):\nabla_x \u - \S^0(\nabla_x \tilde \u):\nabla_x \tilde \u],
\end{multline*}
using $(\theta - \tilde \theta)^2 \geq 0$ and whence, by means of {convexity} of the function $\nabla_x \u \mapsto \S^0(\nabla_x \u):\nabla_x \u$ we get

$$\frac{\tilde \theta- \theta}{\tilde \theta}[\S^0 (\nabla_x \u):\nabla_x \u - \S^0(\nabla_x \tilde \u):\nabla_x \tilde \u] {\geq} 2 \frac{\tilde \theta- \theta}{\tilde \theta}\S^0 (\nabla_x \tilde \u):\nabla_x (\u - \tilde \u) ,$$
where similarly as in \eqref{viscosityest9}, we can estimate 

 $$\left |\int_{\Omega_\varepsilon} \frac{\tilde \theta- \theta}{\tilde \theta}\S^0 (\nabla_x \tilde \u):\nabla_x (\u - \tilde \u)dx\right|$$

$$\leq \delta   \left(  \|\partial_y (u^3 - \tilde u)\|_{L^2(\Omega_\ep)}^2  + \|\div_x (\u - \tilde \u)\|_{L^2(\Omega_\ep)}^2\right)  + K(\delta, \cdot) {\|\theta - \tilde \theta\|_{L^2(\Omega_\varepsilon)}^2},$$
for any $\delta > 0$.

Finally, we estimate the ``extra terms''. Thanks to \eqref{viscosityest7} we immediately get that

$$\frac{ \tilde \theta - \theta}{\tilde \theta} \S^0( \nabla_x \tilde \u ) : \nabla_x \tilde \u = \frac{\tilde \theta - \theta}{\tilde \theta} \nu_0 (\de_y \tilde u)^2.$$
Using the integration by parts  we calculate 

\begin{multline*}\int_{\Omega\ep}  \S^0( \nabla_x \tilde \u) : (\nabla_x \u - \nabla_x \tilde \u) dx = \\- \int_{\Omega\ep}  \div_x \S^0( \nabla_x \tilde \u) \cdot ( \u - \tilde \u) dx + \int_{\partial \Omega_\ep}     \S^0( \nabla_x \tilde \u) ( \u - \tilde \u) \n               dS.\end{multline*}
Since 

$$ ( \u - \tilde \u)\cdot \S^i( \nabla_x \tilde \u) \n  =  \de_y \tilde u \left(\begin{array}{c} (\eta_i - \frac23 \mu_i) n^1 \\ (\eta_i - \frac23 \mu_i) n^2 \\ \nu_i n^3 \end{array}\right) \cdot \left(\begin{array}{c} u^1 \\ u^2 \\ u^3 - \tilde u \end{array}\right), \mbox{ with } \n = \left(\begin{array}{c} n^1 \\  n^2 \\  n^3 \end{array}\right),$$  for $i=0,1$ 
 and the fact that  $$\n = \left(\begin{array}{c} 0 \\  0 \\  n^3 \end{array}\right), $$ on $ Q_\ep\times\{0\}$ and $ Q_\ep\times \{1\}$  and 

$$\n = \left(\begin{array}{c} n^1 \\  n^2 \\  0 \end{array}\right), $$
on $\de Q_\ep \times (0,1)$,  we get from the boundary conditions $\u\cdot \n|_{\partial \Omega_\varepsilon} = 0$ and  $\tilde u(\cdot , 0 ) = \tilde u(\cdot, 1) = 0$ that

\eqn{viscosityboundary0}
$$ \int_{\partial \Omega_\ep}   \S^0( \nabla_x \tilde \u)( \u - \tilde \u)  \n dS= 0.$$
Finally, it is straightforward to see that 

 $$ - \int_{\Omega\ep}  \div_x \S^0( \nabla_x \tilde \u) \cdot ( \u - \tilde \u) dx  = - \int_{\Omega\ep}   \nu_0 \de_y^2 \tilde u (u^3 - \tilde u)   dx=  \int_{\Omega\ep}   \nu_0 \de_y \tilde u \de_y(u^3 - \tilde u)   dx.$$
Hence,

$$\int_{\Omega\ep}  \S^0( \nabla_x \tilde \u) : (\nabla_x \u - \nabla_x \tilde \u) dx =
 \int_{\Omega\ep}   \nu_0 \de_y \tilde u \de_y(u^3 - \tilde u)   dx.$$

Summing up the results of this section together with \eqref{esthetanon} we can rewrite the relation \eqref{relentineq4} as

$$ \frac1{|Q_\ep|}\int_{\Omega_\ep} \left(\frac12 \rho | \u - \tilde \u|^2 + \EE(\rho, \theta|\tilde \rho,\tilde\theta)\right)(\tau , \cdot )dx $$

$$+c \frac1{|Q_\ep|} \int_0^\tau   \left( \| u^3 -  \tilde u\|_{L^{2}(\Omega_\ep)}^2  + \|\nabla_x (\u - \tilde \u)\|^2_{L^2(\Omega_\ep)}\right) dt $$

$$
+\frac1{|Q_\ep|} \int_0^\tau  \int_{\Omega\ep} \left( \frac{\q (\theta, \nabla_x \theta)\cdot \nabla_x \tilde \theta}{\theta}   - \frac{\tilde \theta}{\theta} \frac{\q(\theta, \nabla_x \theta)\cdot \nabla_x \theta}{\theta}\right.$$

$$\left.+ (\tilde \theta - \theta)    \frac{q(\tilde \theta, \de_y \tilde \theta)\de_y \tilde \theta}{{\tilde \theta}^2}+ \de_y  (\theta - \tilde \theta )    \frac{q(\tilde \theta, \de_y \tilde \theta)}{\tilde \theta}  \right)dx dt $$

 $$\leq  {\Gamma(\ep)} + \frac1{|Q_\ep|} \int_0^\tau  \left[ { \delta }\left( \|u^3 - \tilde u\|_{L^{2}(\Omega_\ep)}^2 +  \|\nabla_x (\u - \tilde \u)\|^2_{L^2(\Omega_\ep)}\right)   \right.  $$

\eqn{relentineq5} 
$$ \left. + \ep \|\nabla_x \u\|_{L^2(\Omega_\ep)}^2+K(\delta, \cdot) \int_{\Omega_\ep}\left( \frac12 \rho |u^3 - \tilde u |^2  +  \EE(\rho, \theta|\tilde \rho , \tilde \theta)\right) dx   \right] dt, $$
for almost all $\tau \in [0,T]$ and any $\delta >0$. At this moment we also point out that due to \eqref{totaldiss} and \eqref{ititiallimit}, \eqref{initlim2} we have that 

$$\frac 1{|Q_\ep|}\int_0^T \left\|\nabla_x \u + \nabla_x^T \u - \frac 23 \div_x \u \mathbb{I}\right\|^2_{L^2(\Omega_\ep)}dt\leq c,
$$
with $c$ independent of $\ep$ and thus by Lemma \ref{kornlike} the term

$$
\ep\frac 1{|Q_\ep|}\int_0^\tau  \|\nabla_x \u\|_{L^2(\Omega_\ep)}dt,
$$
can be included in $\Gamma(\varepsilon)$ (cf.  Equation (2.54) in \cite{Fei2}).

Now we may choose $\delta >0$ so small that the inequality  \eqref{relentineq5} takes the form

$$ \frac1{|Q_\ep|}\int_{\Omega_\ep} \left(\frac12 \rho | \u - \tilde \u|^2 + \EE(\rho, \theta|\tilde \rho,\tilde\theta)\right)(\tau , \cdot )dx $$

$$
+\frac1{|Q_\ep|} \int_0^\tau  \int_{\Omega\ep} \left( \frac{\q (\theta, \nabla_x \theta)\cdot \nabla_x \tilde \theta}{\theta}   - \frac{\tilde \theta}{\theta} \frac{\q(\theta, \nabla_x \theta)\cdot \nabla_x \theta}{\theta}\right.$$

$$\left.+ (\tilde \theta - \theta)    \frac{q(\tilde \theta, \de_y \tilde \theta)\de_y \tilde \theta}{{\tilde \theta}^2}+ \de_y  (\theta - \tilde \theta )    \frac{q(\tilde \theta, \de_y \tilde \theta)}{\tilde \theta}  \right)dx dt $$

\eqn{relentineq6} 
$$\leq \Gamma(\ep) +C\frac1{|Q_\ep|} \int_0^\tau  \int_{\Omega_\ep} \left(  \frac12 \rho |u^3 - \tilde u |^2 + \EE(\rho, \theta|\tilde \rho , \tilde \theta)   \right)dx  dt, $$
for almost all $\tau \in [0,T]$.

\subsection{Heat conductivity} In accordance with hypotheses \eqref{const-heatflux} and \eqref{heatflux-cond}

$$\q(\theta, \nabla_x \theta ) = -\kappa_0\nabla_x \theta  - \kappa_2 \theta^2\nabla_x \theta - \kappa_3 \theta^3 \nabla_x \theta,$$
and thus

$$q(\tilde \theta, \de_y \tilde \theta) = - \kappa_0 \partial_y \tilde \theta  - \kappa_2 \tilde \theta^2 \partial_y \tilde\theta - \kappa_3 \tilde \theta^3 \de_y \tilde \theta.$$

The terms to be dealt with are

$$
\frac{\q (\theta, \nabla_x \theta)\cdot \nabla_x \tilde \theta}{\theta}   - \frac{\tilde \theta}{\theta} \frac{\q(\theta, \nabla_x \theta)\cdot \nabla_x \theta}{\theta}+ (\tilde \theta - \theta)    \frac{q(\tilde \theta, \de_y \tilde \theta)\de_y \tilde \theta}{{\tilde \theta}^2}+ \de_y  (\theta - \tilde \theta )    \frac{q(\tilde \theta, \de_y \tilde \theta)}{\tilde \theta}. $$
We compute

$$\frac{\tilde \theta}{\theta} \frac{\kappa_0}\theta |\nabla_x \theta|^2 - \frac{\kappa_0}{\theta} \de_y \theta \de_y \tilde \theta + \frac{\theta - \tilde \theta}{\tilde \theta} \frac{\kappa_0}{\tilde \theta } |\de_y \tilde \theta|^2 + \frac{\kappa_0}{\tilde \theta} \de_y \tilde \theta \de_y (\tilde \theta - \theta)$$

$$ = \kappa_0 \left[\tilde \theta |\nabla_x \log (\theta)|^2 - \tilde \theta \de_y \log (\theta) \de_y \log (\tilde \theta) + (\theta - \tilde \theta)|\de_y \log (\tilde \theta)|^2 + \de_y \log (\tilde \theta) \de_y (\tilde \theta- \theta) \right]$$

$$ = \kappa_0 \left[\tilde \theta |\nabla_x \log (\theta)  - \nabla_x \log (\tilde \theta)         |^2  + (\theta - \tilde \theta)|\de_y \log (\tilde \theta)|^2 +\de_y\log (\tilde \theta) \cdot \de_y (\tilde \theta- \theta) \right.$$

$$\left.+ \tilde \theta \de_y \log (\tilde \theta)  (\de_y \log (\theta) - \de_y \log (\tilde \theta))\right] $$

$$= \kappa_0 \left[\tilde \theta |\nabla_x \log (\theta)  - \nabla_x \log (\tilde \theta)         |^2  + (\theta - \tilde \theta)|\de_y \log (\tilde \theta)|^2 + (\tilde \theta -  \theta)\de_y \log (\tilde \theta)\de_y \log (\theta) \right]$$

\eqn{heatcondeq1} $$= \kappa_0 \left[\tilde \theta |\nabla_x \log (\theta)  - \nabla_x \log (\tilde \theta)         |^2  + (\theta - \tilde \theta)\de_y \log (\tilde \theta)\de_y \left( \log (\tilde \theta) - \log (\theta) \right)\right]. $$

Similarly, we get 

$$\kappa_2 \tilde \theta |\nabla_x \theta|^2 - \kappa_2\theta \de_y \theta \de_y \tilde \theta + \kappa_2 (\theta - \tilde \theta) |\de_y \tilde \theta|^2 + \kappa_2\tilde \theta \de_y \tilde \theta \de_y (\tilde \theta - \theta)$$

\begin{equation}= \kappa_2 \left[ \tilde \theta |\nabla_x \theta - \nabla_x \tilde \theta|^2 - (\theta - \tilde \theta) \de_y \tilde \theta \de_y (\theta - \tilde \theta)\right].\end{equation}
Finally, 

$$\kappa_3 \theta \tilde \theta |\nabla_x \theta|^2 - \kappa_3\theta^2 \de_y \theta \de_y \tilde \theta + \kappa_3 (\theta - \tilde \theta) \tilde \theta  |\de_y \tilde \theta|^2 +\kappa_3\tilde \theta^2 \de_y \tilde \theta \de_y (\tilde \theta - \theta)$$

$$=\kappa_3 \left[ \tilde  \theta  \theta \nabla_x \theta\cdot (\nabla_x \theta - \nabla_x \tilde \theta) + \tilde \theta \theta \de_y \theta \de_y \tilde \theta -\theta^2 \de_y \theta \de_y \tilde \theta + (\theta - \tilde \theta) \tilde \theta  |\de_y \tilde \theta|^2 + \tilde \theta^2 \de_y \tilde \theta \de_y (\tilde \theta - \theta)\right]$$

$$=\kappa_3 \left[ \tilde  \theta  \theta |\nabla_x \theta - \nabla_x \tilde \theta|^2 + 2\tilde \theta \theta \de_y \theta \de_y \tilde \theta -\theta^2 \de_y \theta \de_y \tilde \theta  - \tilde \theta^2 \de_y \tilde \theta \de_y \theta\right]$$

\eqn{heatcondeq3}
$$=\kappa_3  \tilde  \theta  \theta |\nabla_x \theta - \nabla_x \tilde \theta|^2 - \kappa_3  (\theta- \tilde \theta)^2  \de_y \theta \de_y \tilde \theta ,$$
where

$$\kappa_3(\theta- \tilde \theta)^2  \de_y \theta \de_y \tilde \theta = \kappa_3|\de_y \tilde \theta|^2 (\theta - \tilde \theta)^2 + \kappa_3\de_y (\theta - \tilde \theta) \de_y \tilde \theta (\theta - \tilde \theta)^2.$$
We conclude by observing that the terms on the right-hand side of \eqref{heatcondeq1}--\eqref{heatcondeq3} have either "good" sign or they can be "absorbed" by the remaining integrals in \eqref{relentineq6}. Therefore,  using \eqref{heatcondeq1}--\eqref{heatcondeq3} we can rewrite the inequality \eqref{relentineq6} in the form\footnote{We emphasize that $(\rho,\u,\theta) = (\rho_\ep,\u_\ep,\theta_\ep)$ due to the convention from Section \ref{precal}.}

$$ \frac1{|Q_\ep|}\int_{\Omega_\ep} \left(\frac12 \rho | \u_\ep - \tilde \u|^2 + \EE(\rho_\ep, \theta_\ep|\tilde \rho,\tilde\theta)\right)(\tau , \cdot )dx  $$

$$\leq {\Gamma(\ep)}  + c\frac1{|Q_\ep|} \int_0^\tau \int_{\Omega_\ep} \left(\frac12 \rho |u_\ep^3 - \tilde u |^2 + \EE(\rho_\ep, \theta_\ep|\tilde \rho , \tilde \theta)  \right)dx         dt,$$
for almost all $\tau \in [0,T]$ with   {$\Gamma(\ep) \to 0$ as $\ep \to 0$}.

The Gronwall inequality yields

 $$ \frac1{|Q_\ep|}\int_{\Omega_\ep} \left(\frac12 \rho | \u_\ep - \tilde \u|^2 + \EE(\rho_\ep, \theta_\ep|\tilde \rho,\tilde\theta)\right)(\tau , \cdot )dx \leq \Gamma(\ep,T) ,$$
where $\Gamma(\ep,T) \to 0$ as $\ep \to 0$. Consequently, the main theorem is proven.

\end{document}